\newtheorem{theorem}{Theorem}[section]
\newtheorem{lemma}[theorem]{Lemma}
\newtheorem{proposition}[theorem]{Proposition}
\theoremstyle{definition}
\newtheorem{definition}[theorem]{Definition}
\theoremstyle{remark}
\newtheorem{remark}[theorem]{Remark}
\numberwithin{equation}{section}
\begin{document}

\setcounter{page}{1}

\title[Local and 2-local derivations of  solvable Leibniz algebras]{Local and 2-local derivations of solvable Leibniz algebras}

\author[ Ayupov Sh.A., Khudoyberdiyev A.Kh., Yusupov B.B. ]{Shavkat Ayupov$^{1,2}$, Abror Khudoyberdiyev$^{1,2}$, Bakhtiyor Yusupov$^2$}
\address{$^1$ V.I.Romanovskiy Institute of Mathematics\\
  Uzbekistan Academy of Sciences, 81 \\ Mirzo Ulughbek street, 100170  \\
  Tashkent,   Uzbekistan}
\address{$^2$ National University of Uzbekistan, 4, University street, 100174, Tashkent, Uzbekistan}
\email{\textcolor[rgb]{0.00,0.00,0.84}{sh$_{-}$ayupov@mail.ru}}

\email{\textcolor[rgb]{0.00,0.00,0.84}{khabror@mail.ru}}
\email{\textcolor[rgb]{0.00,0.00,0.84}{baxtiyor\_yusupov\_93@mail.ru}}
\maketitle

\begin{abstract} We show that any local derivation on the solvable Leibniz algebras with model or abelian nilradicals,
whose the dimension of complementary space is maximal is a derivation. We show that solvable Leibniz algebras with abelian nilradicals,
which have $1$-dimension complementary space, admit local derivations which are not derivations. Moreover, similar
problem concerning $2$-local derivations of such algebras are investigated and an example of solvable Leibniz algebra given such that any $2$-local derivation on it is a derivation, but which admit local derivations which are not derivations.

\end{abstract}
{\it Keywords:} Leibniz algebras, solvable algebras, nilpotent algebras, derivation, inner derivation, local derivation, 2-local derivation.
\\

{\it AMS Subject Classification:} 17A32, 17B30, 17B10.

\section{Introduction}

In recent years non-associative analogues of classical constructions become of
interest in connection with their applications in many branches of mathematics and
physics. The notions of local and $2$-local derivations are also become popular for some non-associative algebras such as Lie and Leibniz algebras.

The notions of local derivations were introduced in 1990 by R.V.Kadison \cite{Kadison} and D.R.Larson, A.R.Sourour \cite{Larson}. Later in 1997, P.\v{S}emrl introduced the notion of  $2$-local derivations and
$2$-local automorphisms on algebras \cite{Sem}. The main problems concerning these notions are to find conditions under which all local ($2$-local) derivations become (global)derivations and to present examples of algebras with local ($2$-local) derivations that
are not derivations.

Investigation of local derivations on Lie algebras was initiated in papers in \cite{Ayupov7} and \cite{ChenWang} . Sh.A.Ayupov and K.K.Kudaybergenov
have proved that every local
derivation on semi-simple Lie algebras is a derivation and gave examples of nilpotent
finite-dimensional Lie algebras with local derivations which are not derivations. In \cite{Ayupov6} local derivations of solvable Lie algebras are investigated and it is shown that in the class of solvable Lie algebras there exist algebras which admit
local derivations which are not ordinary derivation and also algebras for which every local derivation
is a derivation. Moreover, it is proved that every local derivation on a finite-dimensional solvable Lie algebra with model
nilradical and maximal dimension of complementary space is a derivation.
In \cite{AKB} local derivations and automorphism of complex finite-dimensional simple Leibniz algebras are investigated. They proved that all local derivations on a finite-dimensional complex simple Leibniz algebra are automatically derivations and it is shown that filiform Leibniz algebras admit local derivations which are not derivations.

Several papers have been devoted to similar notions and corresponding problems for $2$-local derivations and automorphisms of finite-dimensional Lie and Leibniz algebras \cite{AyuKud, AKB, AyuKudRak, ChenWang}.
Namely, in
\cite{AyuKudRak} it is proved that every 2-local derivation on a
semi-simple Lie algebra is a derivation and that
each finite-dimensional nilpotent Lie algebra, with dimension
larger than two admits 2-local derivation which is not a
derivation. Concerning 2-local
automorphism, Z.Chen and D.Wang in \cite{ChenWang} prove that if \(\mathcal{L},\) is
a simple Lie algebra of type $A_{l},D_{l}$ or $E_{k}, (k = 6, 7,
8)$ over an algebraically closed field of characteristic zero,
then every 2-local automorphism of \(\mathcal{L},\) is an automorphism. Finally,
in \cite{AyuKud} Sh.A.Ayupov and K.K.Kudaybergenov generalized this result
of \cite{ChenWang} and proved that every 2-local automorphism of a
finite-dimensional semi-simple Lie algebra over an algebraically
closed field of characteristic zero is an automorphism. Moreover,
they also showed that every nilpotent Lie algebra with finite
dimension larger than two admits $2$-local automorphisms which is
not an automorphism.

 In \cite{Ayupov8}, \cite{Yusupov1} Sh.A.Ayupov and B.B.Yusupov investigated  $2$-local derivations on infinite-dimensional Lie algebras over a field of characteristic zero. They proved that all $2$-local
derivations on the Witt algebra as well as on the positive Witt algebra are (global)
derivations, and give an example of infinite-dimensional Lie algebra with a 2-local
derivation which is not a derivation.


In the present paper we study local and $2$-local derivations of solvable Leibniz algebras.
We show that any local derivation on solvable Leibniz algebras with model or abelian nilradicals, whose the dimension of complementary space is maximal is a derivation, but solvable Leibniz algebras with abelian nilradical,
whose have $1$-dimension complementary space admit local derivations which are not derivations. Moreover, similar
problems concerning $2$-local derivations of such algebras are investigated.
\medskip

\section{Preliminaries}

In this section we give some necessary definitions and preliminary results.

\begin{definition}
A  vector space with bilinear bracket $(\mathcal{L},[\cdot,\cdot])$
is called a Leibniz algebra if for any $x,y,z\in L$ the so-called
Leibniz identity
$$\big[x,[y,z]\big]=\big[[x,y],z\big]-\big[[x,z],y\big],$$
holds.
\end{definition}

Here, we adopt the right Leibniz identity; since the bracket is
not skew-symmetric, there exists the version corresponding to the
left Leibniz identity,
 \[\big[[x,y],z\big] =  \big[x,[y,z]\big] - \big[y,[x,z]\big] \,. \]

Let $\mathcal{L}$ be a Leibniz algebra. For a Leibniz algebra $\mathcal{L}$ consider the following central lower and
derived sequences:
$$
\mathcal{L}^1=\mathcal{L},\quad \mathcal{L}^{k+1}=[\mathcal{L}^k,\mathcal{L}^1], \quad k \geq 1,
$$
$$\mathcal{L}^{[1]} = \mathcal{L}, \quad \mathcal{L}^{[s+1]} = [\mathcal{L}^{[s]}, \mathcal{L}^{[s]}], \quad s \geq 1.$$

\begin{definition} A Leibniz algebra $\mathcal{L}$ is called
nilpotent (respectively, solvable), if there exists  $p\in\mathbb N$ $(q\in
\mathbb N)$ such that $\mathcal{L}^p=0$ (respectively, $\mathcal{L}^{[q]}=0$).The minimal number $p$ (respectively, $q$) with such
property is said to be the index of nilpotency (respectively, of solvability) of the algebra $\mathcal{L}$.
\end{definition}
Note that any Leibniz algebra $\mathcal{L}$ contains a unique maximal solvable (resp. nilpotent) ideal, called the radical (resp. nilradical) of the algebra.

A derivation on a Leibniz algebra $\mathcal{L}$ is a linear map  $D: \mathcal{L} \rightarrow \mathcal{L}$  which satisfies the Leibniz rule:
\begin{equation}\label{der}
D([x,y]) = [D(x), y] + [x, D(y)] \quad \text{for any} \quad x,y \in \mathcal{L}.
\end{equation}

The set of all derivations of a Leibniz algebra $\mathcal{L}$ is a Lie algebra with respect to
commutation operation and it is denoted by $Der(\mathcal{L}).$

For any element $x\in \mathcal{L},$ the operator of
right multiplication $ad_x: \mathcal{L} \to \mathcal{L}$, defined as $ad_x(z)=[z,x]$ is a derivation, and derivations of this form are called inner derivation. The set of all inner derivations of $\mathcal{L},$ denoted by $ad (\mathcal{L}),$ is an ideal in $Der(\mathcal{L}).$

For a finite-dimensional nilpotent Leibniz algebra  $N$ and for the matrix of the linear operator $ad_x$ denote by $C(x)$ the
descending sequence of its Jordan blocks' dimensions. Consider the
lexicographical order on the set $C(N)=\{C(x)  \ | \ x \in N\}$.

\begin{definition} \label{d4} The sequence
$$\left(\max\limits_{x\in N\setminus N^2} C(x) \right) $$ is said to be the
characteristic sequence of the nilpotent Leibniz algebra $N.$
\end{definition}

\begin{definition}
A linear operator $\Delta$ is called a local derivation, if for any $x \in \mathcal{L},$ there exists a derivation $D_x: \mathcal{L} \rightarrow \mathcal{L}$ (depending on $x$) such that
$\Delta(x) = D_x(x).$ The set of all local derivations on $\mathcal{L}$ we denote by $LocDer(\mathcal{L}).$
\end{definition}

\begin{definition}
A map $\nabla :  \mathcal{L} \rightarrow \mathcal{L}$ (not
necessary  linear) is called a \emph{$2$-local derivation}, if for any $x,y\in \mathcal{L}$, there exists a derivation $D_{x,y}\in Der
(\mathcal{L})$ such that
{\small\[
\nabla(x)=D_{x,y}(x), \quad \nabla(y)=D_{x,y}(y).
\]}
\end{definition}

\subsection{Solvable Leibniz algebras with abelian nilradical}
Let $\textbf{a}_n$ be the $n$-dimensional abelian algebra and let $R$ be a solvable algebra with nilradical $\textbf{a}_n$.
Take a basis $\{f_1,f_2, \dots, f_n, x_1, x_2, \dots x_k\}$ of $R,$ such that $\{f_1,f_2, \dots, f_n\}$ is a basis of $\textbf{a}_n.$
In \cite{Adashev} such solvable algebras in case of $k=n$ are classified and it is proved that
any $2n$-dimensional solvable Leibniz algebra with nilradical $\textbf{a}_n$ is isomorphic the direct sum of two dimensional algebras, i.e., isomorphic to the algebra
  $${\mathcal L_t}: [f_j,x_j] = f_j, \quad [x_j,f_j]=\alpha_jf_j, \quad 1\leq j \leq n,$$
where $\alpha_j\in\{-1,0\}$ and $t$ is the number of zero  parameters $\alpha_j.$

Moreover, in the following theorem the classification of $(n+1)$-dimensional solvable Leibniz algebras with $n$-dimensional abelian nilradical is given.


\begin{theorem}\label{thmQ=1}\cite{Adashev} Let $R$ be a $(n+1)$-dimensional solvable Leibniz algebra with $n$-dimensional abelian nilradical. If $R$ has a basis $\{f_1,f_2, \dots, f_n, x\}$ such that the operator $ad_x|_{\textbf{a}_n}$ has Jordan block form, then it is  isomorphic to one of the following two non-isomorphic algebras:
\[R_1: \left\{\begin{array}{ll}
[f_i,x]=f_i+f_{i+1},&1\leq i\leq n-1,\\[1mm]
[f_n,x]=f_n,&\\[1mm]
\end{array}\right.
R_2: \left\{\begin{array}{ll}
[f_i,x]=f_i+f_{i+1},&1\leq i\leq n-1,\\[1mm]
[f_n,x]=f_n,&\\[1mm]
[x,f_i]=-f_i-f_{i+1},&1\leq i\leq n-1,\\[1mm]
[x,f_n]=-f_n.&\\[1mm]
\end{array}\right.\]
\end{theorem}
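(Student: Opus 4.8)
The plan is to recover the entire multiplication table of $R$ from the single operator $A:=ad_x|_{\textbf{a}_n}$, following the classical scheme for classifying solvable algebras with a prescribed nilradical, but paying extra attention to the data specific to the non-antisymmetric setting. Write $R=\textbf{a}_n\oplus\langle x\rangle$. Since $\dim R/\textbf{a}_n=1$ the quotient is abelian, hence $[R,R]\subseteq\textbf{a}_n$; thus $R$ is solvable and $[x,x]\in\textbf{a}_n$. Moreover $A$ cannot be nilpotent, for otherwise $R$ itself would be nilpotent, contradicting maximality of the nilradical $\textbf{a}_n$; being a single Jordan block, $A$ therefore has a unique nonzero eigenvalue $\lambda$. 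After rescaling $x$ so that $\lambda=1$ and choosing a Jordan basis $f_1,\dots,f_n$ of $\textbf{a}_n$ for $A$, I may assume
\[
[f_i,x]=f_i+f_{i+1}\quad(1\le i\le n-1),\qquad [f_n,x]=f_n,
\]
so that $A=I+\mathcal N$ with $\mathcal N$ a regular (full-size) nilpotent Jordan block. It then remains only to determine the left products $[x,f_i]$ and the square $[x,x]$.

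Write $L$ for the restriction to $\textbf{a}_n$ of the left multiplication $z\mapsto[x,z]$. Substituting the triples $(x,f_i,x)$ and $(x,x,f_i)$ into the Leibniz identity and using that $\textbf{a}_n$ is abelian together with $[x,x]\in\textbf{a}_n$ gives
\[
LA=AL,\qquad L^2+AL=0 .
\]
Since $L$ commutes with $A=I+\mathcal N$ it commutes with the regular nilpotent $\mathcal N$, hence $L=p(\mathcal N)$ for some polynomial $p$; under the identification $k[\mathcal N]\cong k[t]/(t^n)$ the relation $L(L+A)=0$ reads $p(t)\bigl(p(t)+1+t\bigr)\equiv 0\pmod{t^n}$. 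As $k[t]/(t^n)$ is a local ring with maximal ideal $(t)$, one of the two factors must be a non-unit: if $p(0)=0$ the complementary factor is a unit and $p=0$, while if $p(0)=-1$ then $p$ is a unit and $p(t)=-(1+t)$; no other value of $p(0)$ can occur. Hence either $L=0$, i.e.\ $[x,f_i]=0$ for all $i$, or $L=-A$, i.e.\ $[x,f_i]=-[f_i,x]$.

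To finish I would normalize $[x,x]$ within each branch. Replacing $x$ by $x+w$ with $w\in\textbf{a}_n$ leaves the operators $A$ and $L$ unchanged and replaces $[x,x]$ by $[x,x]+(L+A)(w)$. If $L=0$, then $L+A=A$ is invertible, so a suitable $w$ makes $[x,x]=0$ and the table is exactly $R_1$. If $L=-A$, then the Leibniz identity applied to the triple $(x,x,x)$ already forces $L([x,x])=0$, i.e.\ $-A([x,x])=0$, so $[x,x]=0$ and the table is $R_2$. A routine verification confirms that the Leibniz identity does hold for $R_1$ and $R_2$, so both cases genuinely occur; and they are not isomorphic, since $R_2$ is a Lie algebra (its brackets are antisymmetric: $[f_i,x]+[x,f_i]=0$ for all $i$ and $[x,x]=0$) whereas $R_1$ is not ($[f_n,x]=f_n\ne 0=-[x,f_n]$), and being a Lie algebra is preserved under isomorphism.

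The main obstacle is the genuinely Leibniz-specific part: controlling the extra unknowns $L$ and $[x,x]$, which have no counterpart in the Lie classification. The device that keeps this elementary is the observation that $L$ must commute with a regular nilpotent operator and hence is a polynomial in it; this converts the quadratic constraint $L^2+AL=0$ into a transparent factorization problem in the local Artinian ring $k[t]/(t^n)$, whose only solutions are $L=0$ and $L=-A$.
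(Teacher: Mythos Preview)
The paper does not prove this theorem; it is quoted verbatim from \cite{Adashev} as a preliminary classification result, so there is no ``paper's own proof'' to compare against. Your argument must therefore be judged on its own merits, and it is correct.

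Your route is the natural one for solvable extensions of an abelian nilradical: once the right action $A$ is normalised to a Jordan block with eigenvalue~$1$, the only unknowns are the left action $L=[x,\,\cdot\,]|_{\textbf{a}_n}$ and the square $[x,x]$. You extract exactly the constraints that the Leibniz identity imposes, namely $LA=AL$, $L^2+AL=0$, and $L([x,x])=0$; all other instances of the identity are automatically satisfied because $\textbf{a}_n$ is abelian and $[x,x]\in\textbf{a}_n$. The key technical step --- passing from ``$L$ commutes with a regular nilpotent'' to $L\in k[\mathcal N]\cong k[t]/(t^n)$ and then solving $p(p+1+t)=0$ in that local ring --- is clean and gives the dichotomy $L\in\{0,-A\}$ without any case-by-case matrix computation. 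The treatment of $[x,x]$ in each branch (absorbing it via $x\mapsto x+w$ when $L+A$ is invertible, and using $L([x,x])=0$ with $L=-A$ invertible otherwise) is also correct, as is the non-isomorphism argument via the Lie/non-Lie distinction.

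One cosmetic remark: when you write ``one of the two factors must be a non-unit'', the underlying reason is simply that evaluating at $t=0$ gives $p(0)\bigl(p(0)+1\bigr)=0$ in the field $k$; you then use that the \emph{other} factor is a unit to kill the first. Stating it this way makes the local-ring step even more transparent.
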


In the following propositions, we present the general form of a derivation of the algebras ${\mathcal L_t},$ $R_1$ and $R_2$.

\begin{proposition}\cite{Adashev}  Any derivation $D$ of the algebra ${\mathcal L_t}$ has the following form:
\[ \quad D(f_j)=a_jf_j, \quad D(x_j)=\alpha_jb_jf_j, \quad 1\leq j\leq n,\]
where $\alpha_j\in\{-1,0\}$ and $t$ is the number of zero  parameters $\alpha_j.$
\end{proposition}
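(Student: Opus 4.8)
The plan is to determine the derivation algebra $Der(\mathcal{L}_t)$ directly from the multiplication table. Recall that $\mathcal{L}_t$ is the direct sum of the two-dimensional subalgebras spanned by $f_j$ and $x_j$, so the only nonzero products among basis vectors are $[f_j,x_j]=f_j$ and $[x_j,f_j]=\alpha_jf_j$; in particular $[f_i,f_j]=[x_i,x_j]=0$ for all $i,j$, and $[f_i,x_j]=[x_i,f_j]=0$ for $i\ne j$. I would write an arbitrary linear operator $D$ as
\[D(f_j)=\sum_{i=1}^n\bigl(c_{ij}f_i+d_{ij}x_i\bigr),\qquad D(x_j)=\sum_{i=1}^n\bigl(\mu_{ij}f_i+\nu_{ij}x_i\bigr),\]
with unknown scalars, and then impose the Leibniz rule \eqref{der} on the basic products, one relation at a time.

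First, applying \eqref{der} to $[f_j,x_j]=f_j$ and using $[x_i,x_j]=0$ gives $D(f_j)=(c_{jj}+\nu_{jj})f_j$, which forces $c_{ij}=0$ for $i\ne j$, $d_{ij}=0$ for all $i,j$, and $\nu_{jj}=0$; thus $D(f_j)=a_jf_j$ with $a_j:=c_{jj}$. Next, applying \eqref{der} to $[f_i,x_j]=0$ for $i\ne j$ yields $\nu_{ij}f_i=0$, so every $\nu_{ij}$ vanishes and $D(x_j)=\sum_i\mu_{ij}f_i$ lies in the nilradical. At this stage the relations $[x_j,f_j]=\alpha_jf_j$ and $[x_i,f_j]=0$ ($i\ne j$) are then automatically satisfied and give no new information.

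The decisive step is to exploit the relations $[x_i,x_j]=0$, which are easy to overlook because $x_i,x_j$ lie outside the nilradical. For $i\ne j$, \eqref{der} applied to $[x_i,x_j]=0$ gives $\mu_{ji}f_j+\alpha_i\mu_{ij}f_i=0$; since $f_i$ and $f_j$ are linearly independent this forces $\mu_{ij}=0$ for all $i\ne j$, so $D(x_j)=\mu_{jj}f_j$. Finally, \eqref{der} applied to $[x_j,x_j]=0$ gives $\mu_{jj}(1+\alpha_j)f_j=0$; since $\alpha_j\in\{-1,0\}$, this means $\mu_{jj}=0$ whenever $\alpha_j=0$, while $\mu_{jj}$ is unconstrained when $\alpha_j=-1$. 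In either case $\mu_{jj}=\alpha_jb_j$ for a suitable scalar $b_j$, which gives exactly $D(f_j)=a_jf_j$ and $D(x_j)=\alpha_jb_jf_j$, as claimed. The converse — that every operator of this form satisfies \eqref{der} — is an immediate check on the defining brackets. I expect no genuine difficulty here: the argument is elementary linear algebra, and the only point requiring care is to run systematically through all the brackets, in particular the relations $[x_i,x_j]=0$, since these are precisely what produce the factor $\alpha_j$ in the formula for $D(x_j)$.
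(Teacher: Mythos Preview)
Your argument is correct and complete: the systematic run through the derivation identity on all basic brackets, in particular the crucial use of $[x_i,x_j]=0$ (for $i\ne j$ to kill the off-diagonal $\mu_{ij}$, and for $i=j$ to produce the factor $\alpha_j$), is exactly what is needed. Note, however, that the paper does not give a proof of this proposition at all---it is quoted from \cite{Adashev} without argument---so there is no in-paper proof to compare your approach against; your direct linear-algebra computation is the standard way to establish such a result.
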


\begin{proposition}\label{123} Any derivation $D$ of the algebras $R_1$ and $R_2$ has the following form:
\[Der(R_1):
D(f_i)=\sum\limits_{j=i+1}^n\alpha_{j-i+1}f_{j}+\alpha_1f_i.
\]
\[Der(R_2):\left\{\begin{array}{lll}
D(f_i)&=\sum\limits_{j=i+1}^n\alpha_{j-i+1}f_{j}+\alpha_1f_i,\ \ \ \ 1\leq i\leq n,\\
D(x)&=\sum\limits_{j=1}^n\beta_jf_j.
\end{array}\right.
\]
\end{proposition}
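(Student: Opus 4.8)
The plan is to treat $R_1$ and $R_2$ in parallel. I would set $A:=\textbf{a}_n$ and work in the basis $(f_1,\dots,f_n)$, in which $\varphi:=ad_x|_A=I+\eta$, where $\eta$ is the nilpotent shift $\eta(f_i)=f_{i+1}$ for $i<n$ and $\eta(f_n)=0$; thus $\varphi$ is a single unipotent Jordan block. The first step is to record that $R^2=A$ for both algebras — the products $[f_i,x]=f_i+f_{i+1}$ $(i<n)$ and $[f_n,x]=f_n$ already span $A$, while all remaining products lie in $A$ — and to note that $R^2$ is invariant under any derivation, being spanned by brackets. Hence for $D\in Der(R)$ we get $D(A)\subseteq A$, so $D(f_i)\in A$, and it makes sense to study $\bar D:=D|_A\in\mathrm{End}(A)$; write also $D(x)=\sum_{k=1}^n c_kf_k+c\,x$.

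The main step is to apply the Leibniz rule to $[f_i,x]=\varphi(f_i)$. Since $[f_i,f_k]=0$, this yields $\bar D(\varphi f_i)=\varphi\,\bar D(f_i)+c\,\varphi(f_i)$ for all $i$, i.e. the operator identity $[\bar D,\eta]=c(I+\eta)$. Taking traces forces $c=0$, and then $\bar D$ commutes with $\eta$. I would then invoke the classical fact that the centralizer in $\mathrm{End}(A)$ of a single nilpotent Jordan block is precisely the algebra of polynomials in it, which gives $\bar D=\sum_{m=0}^{n-1}\alpha_{m+1}\eta^m$ and hence the claimed formula $D(f_i)=\alpha_1f_i+\sum_{j=i+1}^n\alpha_{j-i+1}f_j$.

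It then remains to handle $D(x)=\sum_k c_kf_k$. For $R_1$ I would use the Leibniz rule on $[x,x]=0$: since $[x,f_k]=0$ there, it gives $0=[D(x),x]=\sum_k c_k\varphi(f_k)$, and invertibility of $\varphi$ forces all $c_k=0$, so $D(x)=0$ (tacitly included in the stated form for $R_1$). For $R_2$ the left multiplication $[x,\cdot]$ restricts to $-\varphi$ on $A$, and checking the Leibniz rule on $[x,f_i]$ and on $[x,x]$ shows that no constraint on the $c_k$ arises; thus $D(x)=\sum_{j=1}^n\beta_jf_j$ with the $\beta_j$ free. Finally, since every polynomial in $\eta$ commutes with $\varphi=I+\eta$, a short direct verification gives the converse, namely that each linear map of the displayed form really is a derivation of $R_1$, resp. $R_2$.

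I expect the only slightly delicate point to be the bookkeeping of the Leibniz identity, handled separately for the generators $[f_i,x]$ with $i<n$ and for $[f_n,x]$ (and, in the case of $R_2$, also for $[x,f_i]$ and $[x,x]$); the conceptual heart of the argument is the reduction to "$\bar D$ commutes with a single Jordan block," after which the description of such centralizers closes everything out.
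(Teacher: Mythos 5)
Your argument is correct. Note, however, that the paper itself offers no proof of Proposition~\ref{123}: it is stated as a fact (in the spirit of the preceding proposition quoted from the classification paper), so there is no in-text argument to compare yours against; the implicit expectation is a direct coefficient-by-coefficient computation. Your route is a clean structural version of that computation: the invariance of $R^2=\textbf{a}_n$ under derivations justifies $D(f_i)\in\textbf{a}_n$; the Leibniz rule on $[f_i,x]$ gives $[\bar D,\eta]=c(I+\eta)$, the trace kills $c$ (legitimate over $\mathbb{C}$, where the paper works), and the centralizer of a single Jordan block being the polynomials in it yields exactly the displayed form of $D(f_i)$; finally $[x,x]=0$ forces $D(x)=0$ in $R_1$ (which is indeed the intended reading of the statement, consistent with the derivations $D_1,D_2$ used later in the paper) while in $R_2$ the left action $[x,\cdot]|_{\textbf{a}_n}=-\varphi$ cancels the right action and leaves $D(x)=\sum_j\beta_jf_j$ unconstrained. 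Your converse check that every map of this form is a derivation (which the bare statement also needs) is likewise correct, since polynomials in $\eta$ commute with $\varphi=I+\eta$. I see no gap.
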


%

\subsection{Solvable Leibniz algebras model nilradical}
Let $N$ be a nilpotent Leibniz algebra with the characteristic sequence
$(m_1,\ldots,m_s),$ and multiplication table
$$N_{m_1,\dots,m_s}: [e^t_i,e^1_1]=e^t_{i + 1},\ 1 \leq t\leq s,\, 1\leq i\leq m_t-1.$$

The algebra $N_{m_1,\dots,m_s}$ usually is said to be model Leibniz algebra.

\begin{theorem}\cite{Omirov} An solvable Leibniz algebra $R$ with nilradical $N_{m_1,\dots, m_s},$ such that $DimR - DimN_{m_1,\dots, m_s} =s$, is isomorphic to the  algebra:
 $$R(N_{m_1,\dots,m_s},s): \left\{\begin{array}{ll}
[e^t_i,e^1_1]=e^t_{i+1}, & 1\leq t\leq s, \,1\leq i\leq m_t-1 ,\\[1mm]
[e^1_i,x_1]=ie^1_i, & 1\leq i\leq m_1,\\[1mm]
[e^t_i,x_1]=(i-1)e^t_i, & 2\leq t\leq s, \,2\leq i\leq m_t,\\[1mm]
[e^t_i,x_t]=e^t_i, & 2\leq t\leq s, \,1\leq i\leq m_t,\\[1mm]
[x_1,e^1_1]=-e^1_1, \\[1mm]
 \end{array}\right.$$
where $\{x_1,\ldots x_s\}$ is a basis of complementary vector space.
\end{theorem}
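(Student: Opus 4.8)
The plan is to adapt the Mubarakzjanov method for classifying solvable algebras with a prescribed nilradical to the Leibniz setting. Let $R$ be a finite-dimensional solvable Leibniz algebra with nilradical $N=N_{m_1,\dots,m_s}$, and fix a subspace $V$ complementary to $N$ with basis $\{x_1,\dots,x_s\}$. Two standard structural facts are used at the outset. First, the derived algebra of a finite-dimensional solvable Leibniz algebra is nilpotent, so $[R,R]\subseteq N$; in particular $[x_i,x_j]\in N$ for all $i,j$. Second, the restrictions $\rho_i:=ad_{x_i}|_{N}$ are derivations of $N$ that are \emph{nil-independent}, meaning that no nontrivial linear combination $\sum_i\lambda_i\rho_i$ is a nilpotent operator; otherwise one could enlarge the nilradical. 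Since $\dim R-\dim N=s$, the algebra $R$ thus realises the maximal possible number of nil-independent derivations of $N$, and the problem splits into (i) understanding that maximal family inside $Der(N)$, and (ii) reconstructing the remaining products of $R$.

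For step (i) I would first compute $Der(N_{m_1,\dots,m_s})$ by imposing the derivation rule \eqref{der} on the defining relations $[e^t_i,e^1_1]=e^t_{i+1}$, and then isolate, modulo the ideal of nilpotent derivations, the diagonalisable part. I expect this to be the main obstacle: one must prove that the rank of the weight system of $N_{m_1,\dots,m_s}$ is exactly $s$, so that no $s+1$ of its derivations can be nil-independent, and one must exhibit a concrete family of $s$ commuting semisimple derivations realising this rank, namely $d_1$ with $d_1(e^1_i)=ie^1_i$ and $d_1(e^t_i)=(i-1)e^t_i$ for $t\ge2$, together with $d_t$ $(2\le t\le s)$ acting as the identity on the $t$-th string $\{e^t_i\}$ and by zero elsewhere. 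Once this is established, a change of the complementary basis $x_i\mapsto\sum_j\mu_{ij}x_j$ (together with absorbing inner derivations of $N$) lets us assume $ad_{x_i}|_N=d_i$, which is exactly the data $[e^1_i,x_1]=ie^1_i$, $[e^t_i,x_1]=(i-1)e^t_i$, $[e^t_i,x_t]=e^t_i$ of the stated algebra.

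For step (ii) it remains to pin down the left multiplications $[x_i,\cdot]$ and the products $[x_i,x_j]\in N$. The left multiplications are constrained by the Leibniz identity applied to triples such as $(e^1_1,x_i,e^1_1)$ and $(e^t_j,x_i,e^1_1)$; using the weights already fixed in step (i) one finds that each $[x_i,e^t_j]$ is determined up to a component that can be absorbed by a change of basis $x_i\mapsto x_i+n_i$ with $n_i\in N$, the only surviving term being $[x_1,e^1_1]=-e^1_1$, with the coefficient $-1$ coming, for instance, from the identity $[e^1_1,[x_1,e^1_1]]=[[e^1_1,x_1],e^1_1]-[[e^1_1,e^1_1],x_1]$ together with $[e^1_1,x_1]=e^1_1$. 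Finally, the components of $[x_i,x_j]$ are killed: any nonzero weight component would contradict nil-independence of the $\rho_i$ or could be removed by the residual basis change, leaving $[x_i,x_j]=0$. Assembling these normalisations yields precisely $R(N_{m_1,\dots,m_s},s)$; the remaining bookkeeping is routine, and the complete argument appears in \cite{Omirov}.
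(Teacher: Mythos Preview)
The paper does not prove this theorem at all: it is stated with a citation to \cite{Omirov} and used as background, so there is no in-paper proof to compare against. Your sketch follows the standard Mubarakzjanov approach and is the expected line of argument, and you correctly identify that the full details are delegated to \cite{Omirov}; since the paper itself does exactly the same thing (just cites the result), your proposal is consistent with what the paper provides.
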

\begin{proposition}\cite{Omirov}\label{pro}
Any derivation $D$ of the algebra $Der(R(N_{m_1,\dots, m_s},s))$ has the following form:
$$\begin{array}{ll}
D(e^1_i)=i\alpha_{1}e^1_i+\alpha_{2}e^1_{i+1},&1 \leq i \leq m_1-1,\\[2mm]
D(e^1_{m_1})=m_1\alpha_{1}e^1_{m_1},&\\[2mm]
D(e^t_i)=((i-1)\alpha_{1}+\beta_{t})e^t_i+\alpha_{2}e^t_{i+1},&  2\leq t\leq s ,\quad 1 \leq i \leq m_t-1,\\[1mm]
D(e^t_{m_t})=((m_t-1)\alpha_{1}+\beta_{t})e^t_{m_t},&  2\leq t\leq s,\\[1mm]
D(x_1)=-\alpha_{2}e^1_1.
 \end{array}$$
\end{proposition}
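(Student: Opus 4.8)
The plan is to prove the statement by direct computation. Writing an arbitrary derivation $D$ of $R:=R(N_{m_1,\dots,m_s},s)$ through its coordinates in the standard basis $\{e^t_i\}\cup\{x_1,\dots,x_s\}$, one substitutes this expression into the Leibniz rule \eqref{der} for every defining product of $R$ and solves the resulting large but very sparse linear system. The one structural fact used at the outset is that the nilradical $N_{m_1,\dots,m_s}$ is a characteristic ideal, so $D(N_{m_1,\dots,m_s})\subseteq N_{m_1,\dots,m_s}$; in particular each $D(e^t_i)$ is a combination of the $e^v_j$, and only the images $D(x_1),\dots,D(x_s)$ may carry components along $x_1,\dots,x_s$.

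The first step is to fix the ``block structure'' of $D$ on the nilradical. For every $t\ge2$ the operator $ad_{x_t}$ is the identity on the $t$-th block $\langle e^t_1,\dots,e^t_{m_t}\rangle$ and zero on all remaining basis vectors; moreover, for any $t$, the map $ad_{e^t_i}=[e^t_i,\,\cdot\,]$ carries all of $R$ into the $t$-th block, since on basis vectors it is nonzero only on $e^1_1$, on $x_1$ and (when $t\ge2$) on $x_t$. Feeding $[e^t_i,x_t]=e^t_i$ into \eqref{der} gives $D(e^t_i)=[D(e^t_i),x_t]+[e^t_i,D(x_t)]$; since the last summand has no component outside the $t$-th block, comparing such components forces $D(e^t_i)$ to lie entirely inside the $t$-th block for every $t\ge2$. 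Next, the relations $[e^1_i,x_1]=ie^1_i$ and $[e^t_i,x_1]=(i-1)e^t_i$ exhibit each $e^t_i$ as an $ad_{x_1}$-eigenvector; substituting into \eqref{der} and comparing eigenspaces, one finds that $D(e^t_i)$ is a combination of $e^t_i$ and $e^t_{i+1}$, plus, when $t=1$, a possible correction $\sum_{v\ge2}\gamma_v e^v_{i+1}$, and that the coefficient of $e^t_{i+1}$ equals a single constant $\alpha_2$ (up to sign the $e^1_1$-coordinate of $D(x_1)$), independent of $t$ and $i$.

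The second step is a propagation argument along the chain relations $[e^t_i,e^1_1]=e^t_{i+1}$: once $D(e^1_1)$ and the $D(e^t_1)$ are known, $D(e^t_{i+1})$ is obtained recursively from $D(e^t_i)$ — and the possible cross-terms of $D(e^1_1)$ do not interfere with these brackets — producing the arithmetic-progression eigenvalues $i\alpha_1$ on $e^1_i$ and $(i-1)\alpha_1+\beta_t$ on $e^t_i$, where $\alpha_1$ and $\beta_t$ are the $e^1_1$- and $e^t_1$-coordinates of $D(e^1_1)$ and $D(e^t_1)$; at $i=m_t$ the spurious term $\alpha_2 e^t_{m_t+1}$ drops out. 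The leftover cross-terms $\gamma_v e^v_{i+1}$ in $D(e^1_i)$ are killed using the trivial products $[e^1_1,x_v]=0$ ($v\ge2$) and the fact that $[e^1_1,\,\cdot\,]$ maps into the first block, so a comparison of blocks yields $\gamma_v=0$. Finally $D(x_1)$ is obtained from the left relation $[x_1,e^1_1]=-e^1_1$ together with $[x_1,x_1]=0$, which give $D(x_1)=-\alpha_2 e^1_1$; and $D(x_t)=0$ for $t\ge2$ follows from $[e^t_i,x_t]=e^t_i$, from $[e^q_i,x_t]=0$ for $q\ne t$, and from $[x_t,x_q]=0$, which successively annihilate every coordinate of $D(x_t)$. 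The argument is closed by checking the converse, that every map of the displayed shape is a derivation — equivalently, that it equals $\alpha_1\,ad_{x_1}+\alpha_2\,ad_{e^1_1}+\sum_{t=2}^s\beta_t\,ad_{x_t}$.

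The hard part is organizational, not conceptual. There are many basis vectors and defining products; the nilpotent operator $ad_{e^1_1}$ raises $ad_{x_1}$-weights by one and hence couples the eigenspaces; and one must carefully track the cross-block coordinates — those of $e^v_{i+1}$ (for $v\ge2$) inside $D(e^1_i)$, and those of the $e^v_1$ inside the $D(x_r)$ — since they survive several of the relations and are destroyed only by the ``trivial'' products involving the $x_r$. A further, minor, nuisance is the behaviour at the top vectors $e^t_{m_t}$, where the $\alpha_2$-term disappears, and the degenerate case $m_t=1$, which is inspected separately.
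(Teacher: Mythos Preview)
The paper does not contain a proof of this proposition: it is quoted from \cite{Omirov} and stated without argument, so there is nothing in the present paper to compare your attempt against. Your outline is the standard direct verification --- write $D$ in coordinates, apply the Leibniz rule to the defining products, and solve the resulting linear constraints --- and it is essentially what one expects the cited source to do.

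A few small remarks on your sketch. Your elimination of the coordinates of $D(x_1)$ from $[x_1,e^1_1]=-e^1_1$ and $[x_1,x_1]=0$ alone is not quite complete: those two relations leave the $x_r$-components (for $r\ge2$) of $D(x_1)$ undetermined, and one has to invoke something like $[e^t_1,x_1]=0$ (for $t\ge2$) to annihilate them. A symmetric comment applies to $D(x_t)$ for $t\ge2$: the relations you list kill the $e^1_1$-, $x_1$- and $x_t$-coordinates, but the remaining $x_r$-coordinates (for $r\ne1,t$) and the $e^v_1$-coordinates need additional trivial products such as $[e^r_1,x_t]=0$ and $[x_t,e^1_1]=0$. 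These are of exactly the same flavour as the checks you already describe, so the gap is organisational rather than substantive. Your closing remark that the displayed map equals $\alpha_1\,ad_{x_1}+\alpha_2\,ad_{e^1_1}+\sum_{t\ge2}\beta_t\,ad_{x_t}$ is a clean way to handle the converse and matches the paper's observation (in the remark following the proposition) that every derivation of $R(N_{m_1,\dots,m_s},s)$ is inner.
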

\begin{remark} Any derivation on the solvable Leibniz algebra $R(N_{m_1,\dots, m_s},s)$  is an inner derivation.
\end{remark}

\section{Local derivation of solvable Leibniz algebras}

\subsection{Local derivation of solvable  Leibniz algebras with nilradicals $R(N_{m_1,\dots, m_s},s)$}

Now we shall give the main result concerning local derivations of solvable  Leibniz algebra $R(N_{m_1,\dots, m_s},s).$
\begin{theorem}\label{thm11}
 Any local derivation on the solvable Leibniz algebra $R(N_{m_1,\dots, m_s},s)$  is a derivation.
\end{theorem}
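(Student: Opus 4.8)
My plan is to rely on the explicit description of $Der(R(N_{m_1,\dots, m_s},s))$ in Proposition \ref{pro}: the derivation algebra is the $(s+1)$-parameter family determined by scalars $\alpha_1,\alpha_2,\beta_2,\dots,\beta_s$; every derivation $D$ carries $e^1_i$ into $\langle e^1_i,e^1_{i+1}\rangle$ and $e^1_{m_1}$ into $\langle e^1_{m_1}\rangle$, carries $e^t_i$ into $\langle e^t_i,e^t_{i+1}\rangle$ and $e^t_{m_t}$ into $\langle e^t_{m_t}\rangle$ for $2\le t\le s$, sends $x_1$ into $\langle e^1_1\rangle$, and annihilates $x_2,\dots,x_s$. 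In particular every derivation maps $R(N_{m_1,\dots, m_s},s)$ into its nilradical $N_{m_1,\dots, m_s}$. Since a local derivation $\Delta$ is linear, it suffices to compute $\Delta$ on the basis $\{e^t_i\}\cup\{x_1,\dots,x_s\}$ and then to recognize the outcome as one of the maps $D$.

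\emph{Step 1 (shape on basis vectors).} Applying the defining property of a local derivation at each basis element and using the membership constraints above, one obtains $\Delta(e^1_i)=a_ie^1_i+b_ie^1_{i+1}$ and $\Delta(e^t_i)=c^t_ie^t_i+d^t_ie^t_{i+1}$ (with the top-index terms $b_{m_1},d^t_{m_t}$ absent), $\Delta(x_1)=p\,e^1_1$, and $\Delta(x_r)=0$ for $r\ge2$, for some scalars; in particular $\Delta$ already sends the generators of each block into that block.

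\emph{Step 2 (rigidity via linear combinations).} Next one evaluates $\Delta$ at suitable combinations and matches the outcome, through Proposition \ref{pro}, against $D_v(v)$ for the relevant derivation $D_v$. For the first block I would use $e^1_i+e^1_j$ with $|i-j|\ge 2$, which forces $a_i=i\alpha_1$ and $b_i=\alpha_2$ with common constants, together with the pairs $x_1+e^1_i$, which link $p$ to $\alpha_2$ and $a_{m_1}$ to $\alpha_1$. For a block $t\ge2$, the combination $e^1_1+e^t_j$ propagates the already-fixed $\alpha_1,\alpha_2$ into that block and yields $d^t_j=\alpha_2$ together with $c^t_j=(j-1)\alpha_1+\beta^{(j)}_t$; then the triples $e^1_1+e^t_j+e^t_{j'}$ show that $\beta^{(j)}_t$ does not depend on $j$, which defines $\beta_t$. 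Collecting everything, $\Delta$ agrees on every basis vector — hence identically, by linearity — with the derivation corresponding to $(\alpha_1,\alpha_2,\beta_2,\dots,\beta_s)$, and the theorem follows.

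The genuine content, and the step I expect to be the main obstacle, is Step 2 in low dimensions. When some $m_t\in\{1,2\}$, or when $s=1$, a block contains too few basis vectors for the ``$|i-j|\ge 2$'' argument, and a sum of two basis vectors may impose no constraint at all; for instance, in $R(N_2,1)$ one is forced to test $\Delta$ on the elements $\lambda e^1_1+\mu e^1_2+\nu x_1$ with $\lambda^2+2\mu\nu=0$ in order to obtain $a_2=2a_1$ and $p=-b_1$. So the degenerate blocks must be treated separately, by pairing and tripling with $e^1_1$, $e^1_{m_1}$, or $x_1$, or with such cleverly weighted combinations, and one must check that the collection of local relations so produced is mutually consistent and genuinely determines a single global derivation. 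The rest is routine coefficient comparison over the index ranges $1\le i\le m_t$, $2\le t\le s$.
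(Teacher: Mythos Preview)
Your proposal is correct and follows essentially the same route as the paper: exploit the explicit description of $Der(R(N_{m_1,\dots,m_s},s))$ from Proposition~\ref{pro}, evaluate $\Delta$ on basis vectors to obtain its shape, and then test $\Delta$ on well-chosen linear combinations to force the remaining coefficients into the pattern of a single derivation. The particular test vectors differ slightly---the paper uses $e^1_1+e^1_i$, $e^1_1+e^t_1$, the triples $e^t_i+e^t_1+e^1_1$, and $x_1+e^1_2$, rather than your $e^1_i+e^1_j$ with $|i-j|\ge 2$ and $e^1_1+e^t_j+e^t_{j'}$---but the mechanism is identical. Your explicit discussion of the degenerate blocks $m_t\in\{1,2\}$ (and in particular the observation that in $R(N_2,1)$ one must test elements with $\lambda^2+2\mu\nu=0$) is in fact more careful than the paper's treatment, which tacitly assumes $m_1\ge 3$ in several steps.
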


\begin{proof} Let  $\Delta$ be a local derivation on $R(N_{m_1,\dots, m_s},s),$ then we have
$$
\Delta(x_i)=\sum\limits_{j=1}^sa_{i,j}x_j+\sum\limits_{p=1}^s\sum\limits_{j=1}^{m_p}b_{i,j}^pe_j^p,\quad
\Delta(e_{i}^t)=\sum\limits_{j=1}^sc_{i,j}^tx_j+\sum\limits_{p=1}^s\sum\limits_{j=1}^{m_p}d_{i,j}^{t,p}e_j^p.
$$

Let $D$ be a derivation on $R(N_{m_1,\dots, m_s},s),$ then by Proposition \ref{pro}, we obtain
$$\begin{array}{ll}
D(e^1_i)=i\alpha_ie^1_i+\beta_ie^1_{i+1},&1 \leq i \leq m_1-1,\\[2mm]
D(e^1_{m_1})=m_1\alpha_{m_1}e^1_{m_1},&\\[2mm]
D(e^t_1)=\sigma_te^t_1+\theta_te^t_2,& 2\leq t\leq s,\\[1mm]
D(e^t_i)=((i-1)\lambda_{i,t}+\mu_{i,t})e^t_i+\delta_{i,t}e^t_{i+1},&  2\leq t\leq s ,\quad 2 \leq i \leq m_t-1,\\[1mm]
D(e^t_{m_t})=((m_t-1)\xi_{i,t}+\eta_{i,t})e^t_{m_t},&  2\leq t\leq s,\\[1mm]
D(x_1)=-\gamma e^1_1.
 \end{array}$$

Considering the equalities $$\Delta(x_j)=D_{x_j}(x_j),\quad 1\leq j\leq s,$$  $$\Delta(e_i^t)=D_{e_i^t}(e_i^t),\quad 1\leq t\leq s,  \ 1\leq i\leq m_t,$$ we have
\[\left\{\begin{array}{lll}
\sum\limits_{j=1}^sc_{i,j}^1x_j+\sum\limits_{p=1}^s\sum\limits_{j=1}^{m_p}d_{i,j}^{1,p}e_j^p=i\alpha_ie^1_i+\beta_ie^1_{i+1},&  1 \leq i \leq m_1-1\\
\sum\limits_{j=1}^sc_{m_1,j}^1x_j+\sum\limits_{p=1}^s\sum\limits_{j=1}^{m_p}d_{m_1,j}^{1,p}e_j^p=m_1\alpha_{m_1}e^1_{m_1},\\
\sum\limits_{j=1}^sc_{1,j}^tx_j+\sum\limits_{p=1}^s\sum\limits_{j=1}^{m_p}d_{1,j}^{t,p}e_j^p=\sigma_te^t_1+\theta_te^t_2, & 2\leq t\leq s\\
\sum\limits_{j=1}^sc_{i,j}^tx_j+\sum\limits_{p=1}^s\sum\limits_{j=1}^{m_p}d_{i,j}^{t,p}e_j^p=((i-1)\lambda_{i,t}+\mu_{i,t})e^t_i+
\delta_{i,t}e^t_{i+1},&   2\leq t\leq s , \ 2 \leq i \leq m_t-1,\\
\sum\limits_{j=1}^sc_{m_p,j}^tx_j+\sum\limits_{p=1}^s\sum\limits_{j=1}^{m_p}d_{m_p,j}^{t,p}e_j^p=((m_t-1)\xi_{m_p,t}+\eta_{m_p,t})e^t_{m_t},&    2\leq t\leq s,\\
\sum\limits_{j=1}^sa_{1,j}x_j+\sum\limits_{p=1}^s\sum\limits_{j=1}^{m_p}b_{1,j}^pe_j^p=-\gamma e^1_1,\\
\sum\limits_{j=1}^sa_{1,j}x_j+\sum\limits_{p=1}^s\sum\limits_{j=1}^{m_p}b_{1,j}^pe_j^p=0,& 2\leq i\leq n.
\end{array}\right.
\]

Form the previous restrictions, we get that
$$\begin{array}{ll}
\Delta(e^1_i)=d_{i,i}^{1,1}e^1_i+d_{i,i+1}^{1,1}e^1_{i+1},&1 \leq i \leq m_1-1,\\[2mm]
\Delta(e^1_{m_1})=d_{m_1,m_1}^{1,1}e^1_{m_1},&\\[2mm]
\Delta(e^t_1)=d_{1,1}^{t,t}e^t_1+d_{1,2}^{t,t}e^t_2,& 2\leq t\leq s,\\[1mm]
\Delta(e^t_i)=d_{i,i}^{t,t}e^t_i+d_{i,i+1}^{t,t}e^t_{i+1},&  2\leq t\leq s ,\quad 2 \leq i \leq m_t-1,\\[1mm]
\Delta(e^t_{m_t})=d_{m_t,m_t}^{t,t}e^t_{m_t},&  2\leq t\leq s,\\[1mm]
\Delta(x_1)=b_{1,1}^1e^1_1.
 \end{array}$$

Consider
$$\Delta(e_1^1+e_1^t)=d_{1,1}^{1,1}e_1^1+d_{1,2}^{1,1}e_2^1+d_{1,1}^{t,t}e_1^t+d_{1,2}^{t,t}e_2^t.$$

On the other hand,
\begin{equation*}
\begin{split}
\Delta(e_1^1+e_1^t)&=D_{e_1^1+e_1^t}(e_1^1+e_1^t)=D_{e_1^1+e_1^t}(e_1^1)+D_{e_1^1+e_1^t}(e_1^t)=\\
                    & =\alpha_{e_1^1+e_1^t}e_1^1+\beta_{e_1^1+e_1^t}e_2^1+\eta_{e_1^1+e_1^t}e_1^t+\beta_{e_1^1+e_1^t}e_2^t.
\end{split}
\end{equation*}

Comparing the coefficients at the basis elements $e_2^1$ and $e_2^t,$ we get $\beta_{e_1^1+e_1^t}=d_{1,2}^{1,1},$ $\beta_{e_1^1+e_1^t}=d_{1,2}^{t,t},$
which implies $$d_{1,2}^{t,t}=d_{1,2}^{1,1}, \quad 2\leq t\leq s.$$

Similarly, considering $\Delta(e_1^1+e_i^1)$ for $3\leq i\leq m_1-1,$ we have
\begin{equation*}
\begin{split}
\Delta(e_1^1+e_i^1)&=d_{1,1}^{1,1}e_1^1+d_{1,2}^{1,1}e_2^1+d_{i,i}^{1,1}e_i^1+d_{i,i+1}^{1,1}e_{i+1}^1\\
 &=D_{e_1^1+e_i^1}(e_1^1+e_i^1)=D_{e_1^1+e_i^1}(e_1^1)+D_{e_1^1+e_i^1}(e_i^1)\\
                    & =\alpha_{e_1^1+e_i^1}e_1^1+\beta_{e_1^1+e_i^1}e_2^1+i\alpha_{e_1^1+e_i^1}e_i^1+\beta_{e_1^1+e_i^1}e_{i+1}^1,
\end{split}
\end{equation*}
which implies
$$d_{i,i}^{1,1}=id_{1,1}^{1,1},\quad d_{i,i+1}^{1,1}=d_{1,2}^{1,1},\quad 3\leq i\leq m_1-1.$$

From the equalities,
\begin{equation*}
\begin{split}
\Delta(e_1^1+e_2^1)&=d_{1,1}^{1,1}e_1^1+d_{1,2}^{1,1}e_2^1+d_{2,2}^{1,1}e_2^1+d_{2,3}^{1,1}e_{3}^1\\
&=D_{e_1^1+e_2^1}(e_1^1+e_2^1)=D_{e_1^1+e_2^1}(e_1^1)+D_{e_1^1+e_2^1}(e_2^1)\\
                    & =\alpha_{e_1^1+e_2^1}e_1^1+\beta_{e_1^1+e_2^1}e_2^1+2\alpha_{e_1^1+e_2^1}e_2^1+\beta_{e_1^1+e_2^1}e_{3}^1,
\end{split}
\end{equation*}
and
\begin{equation*}
\begin{split}
\Delta(e_1^1+e_{m_1}^1)&=d_{1,1}^{1,1}e_1^1+d_{1,2}^{1,1}e_2^1+d_{m_1,m_1}^{1,1}e_{m_1}^1\\
\Delta(e_1^1+e_{m_1}^1)&=D_{e_1^1+e_{m_1}^1}(e_1^1+e_{m_1}^1)=D_{e_1^1+e_{m_1}^1}(e_1^1)+D_{e_1^1+e_{m_1}^1}(e_{m_1}^1)=\\
                    & =\alpha_{e_1^1+e_{m_1}^1}e_1^1+\beta_{e_1^1+e_{m_1}^1}e_2^1+m_1\alpha_{e_1^1+e_{m_1}^1}e_{m_1}^1,
\end{split}
\end{equation*}
we get that $$d_{2,2}^{1,1}=2d_{1,1}^{1,1}, \quad d_{m_1,m_1}^{1,1}=m_1d_{1,1}^{1,1}.$$


Now for $2\leq t\leq s,$ $2\leq i\leq m_t-1,$ we consider
$$\Delta(e_i^t+e_1^t+e_1^1)=d_{i,i}^{t,t}e_i^t+d_{i,i+1}^{t,t}e_{i+1}^t+d_{1,1}^{t,t}e_1^t+d_{1,2}^{t,t}e_{2}^t+d_{1,1}^{1,1}e_1^1+
d_{1,2}^{1,1}e_2^1.$$

On the other hand,
\begin{equation*}
\begin{split}
\Delta(e_i^t+e_1^t+e_1^1)&=D_{e_i^t+e_1^t+e_1^1}(e_i^t+e_1^t+e_1^1)=((i-1)\alpha_{e_i^t+e_1^t+e_1^1}+\eta_{e_i^t+e_1^t+e_1^1,t})e_i^t+\\
                    &+\beta_{e_i^t+e_1^t+e_1^1}e_{i+1}^t+\eta_{e_i^t+e_1^t+e_1^1,t}e_{1}^t+\beta_{e_i^t+e_1^t+e_1^1}e_{2}^t+\\
                   &+\alpha_{e_i^t+e_1^t+e_1^1}e_1^1+\beta_{e_i^t+e_1^t+e_1^1}e_2^1.
\end{split}
\end{equation*}

Comparing the coefficients at the basis elements $e_{i}^t,\ e_1^t$ and $e_1^1,$ we get
$$\alpha_{e_i^t+e_1^t+e_1^1}=d_{1,1}^{1,1},\quad (i-1)\alpha_{e_i^t+e_1^t+e_1^1}+\eta_{e_i^t+e_1^t+e_1^1,t}=d_{i,i}^{t,t},\quad  \eta_{e_i^t+e_1^t+e_1^1,t}=d_{1,1}^{t,t},$$
which implies
$$d_{i,i}^{t,t}=(i-1)d_{1,1}^{1,1}+d_{1,1}^{t,t},\quad 2\leq t\leq s,\quad \leq i\leq m_t-1.$$

Similarly, from
\begin{equation*}
\begin{split}
\Delta(e_{m_t}^t+e_1^t+e_1^1)&=d_{m_t,m_t}^{t,t}e^t_{m_t}+d_{1,1}^{t,t}e_1^t+d_{1,2}^{t,t}e_{2}^t+d_{1,1}^{1,1}e_1^1+
d_{1,2}^{1,1}e_2^1 \\&=D_{e_{m_t}^t+e_1^t+e_1^1}(e_{m_t}^t+e_1^t+e_1^1)=((m_t-1)\alpha_{e_{m_t}^t+e_1^t+e_1^1}+
\eta_{e_{m_t}^t+e_1^t+e_1^1,t})e_{m_t}^t+\\
                    &+\eta_{e_{m_t}^t+e_1^t+e_1^1,t}e_{1}^t+\beta_{e_{m_t}^t+e_1^t+e_1^1}e_{2}^t+
                    \alpha_{e_{m_t}^t+e_1^t+e_1^1}e_1^1+\beta_{e_{m_t}e_i^t+e_1^t+e_1^1}e_2^1,
\end{split}
\end{equation*}
we get that
$$d_{m_t,m_t}^{t,t}=(m_t-1)d_{1,1}^{1,1}+d_{1,1}^{t,t},\quad 2\leq t\leq s.$$

Now, we consider
$$\Delta(x_1+e_2^1)=b_{1,1}^1e_1^1+d_{2,2}^{1,1}e_2^1+d_{2,3}^{1,1}e_3^1.$$

On the other hand,
\begin{equation*}
\begin{split}
\Delta(x_1+e_2^1)&=D_{x_1+e_2^1}(x_1+e_2^1)=-\beta_{x_1+e_2^1}e_1^1+2\alpha_{x_1+e_2^1}e_2^1+\beta_{x_1+e_2^1}e_3^1.
\end{split}
\end{equation*}

Comparing the coefficients at the basis elements $e_3^1$ and $e_1^1,$ we get
$\beta_{x_1+e_2^1}=d_{2,3}^{1,1},$ $-\beta_{x_1+e_2^1}=b_{1,1}^1,$
which implies
$$b_{1,1}^1=-d_{2,3}^{1,1}=-d_{1,2}^{1,1}.$$

Thus, we obtain that the local derivation $\Delta$ has the following form:
$$\begin{array}{ll}
\Delta(e^1_i)=id_{1,1}^{1,1}e^1_i+d_{1,2}^{1,1}e^1_{i+1},&1 \leq i \leq m_1-1,\\[2mm]
\Delta(e^1_{m_1})=m_1d_{1,1}^{1,1}e^1_{m_1},&\\[2mm]
\Delta(e^t_i)=((i-1)d_{1,1}^{1,1}+d_{1,1}^{t,t})e^t_i+d_{1,2}^{1,1}e^t_{i+1},&  2\leq t\leq s ,\quad 1 \leq i \leq m_t-1,\\[1mm]
\Delta(e^t_{m_t})=((m_t-1)d_{1,1}^{1,1}+d_{1,1}^{t,t})e^t_{m_t},&  2\leq t\leq s,\\[1mm]
\Delta(x_1)=-d_{1,2}^{1,1}e^1_1.
 \end{array}$$

Proposition \ref{pro} implies that $\Delta$ is a derivation.
Hence, every local derivation on $R(N_{m_1,\dots, m_s},s)$ is a derivation.

\end{proof}

\subsection{Local derivation of solvable Leibniz algebras with abelian nilradical}

Now we shall give the main result concerning local derivations on solvable  Leibniz algebras with abelian nilradicals.
\begin{theorem} 
Any local derivation on the algebra $\mathcal{L}_t$ is a derivation.

\end{theorem}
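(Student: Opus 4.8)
The plan is to mimic the structure of the proof of Theorem \ref{thm11}: start from an arbitrary local derivation $\Delta$ on $\mathcal{L}_t$, write out its matrix in the basis $\{f_1,\dots,f_n,x_1,\dots,x_n\}$, and then use the local derivation property on carefully chosen test elements to force $\Delta$ into the general form of a derivation described in the Proposition preceding this subsection, namely $D(f_j)=a_jf_j$, $D(x_j)=\alpha_j b_j f_j$. Recall that $\mathcal{L}_t$ decomposes as a direct sum of $n$ two-dimensional algebras with products $[f_j,x_j]=f_j$, $[x_j,f_j]=\alpha_j f_j$ with $\alpha_j\in\{-1,0\}$, so the derivation algebra is quite small: each block contributes only the two scalars $a_j$ (and $\alpha_j b_j$).

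First I would apply $\Delta(f_j)=D_{f_j}(f_j)$ and $\Delta(x_j)=D_{x_j}(x_j)$ for each $j$. Since every derivation sends $f_j$ to a multiple of $f_j$ and $x_j$ to a multiple of $f_j$, this immediately kills most coordinates: $\Delta(f_j)\in\langle f_j\rangle$ and $\Delta(x_j)\in\langle f_j\rangle$ for every $j$. Write $\Delta(f_j)=\lambda_j f_j$ and $\Delta(x_j)=\mu_j f_j$. So after this first round $\Delta$ is already ``block upper-triangular'' in the right way; what remains is to check the compatibility constraints that a genuine derivation must satisfy — essentially that when $\alpha_j=0$ there is no constraint linking $\lambda_j$ and $\mu_j$ (indeed the Proposition allows $a_j$ and $b_j$ independent), and that nothing forces cross-block terms. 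The key subtlety: a local derivation is only required to agree with \emph{some} derivation on each single vector, so a priori $\Delta$ could still fail the Leibniz rule; but here the derivation space is so constrained that the one-vector tests already pin down the form. The only thing left to verify is that the scalars $\lambda_j,\mu_j$ obtained are consistent with a single derivation, which they are since the general derivation has $n$ (or $2n$, counting the $\alpha_j=0$ blocks) free scalar parameters and no relations among different blocks.

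Therefore the main step is simply observing that evaluating $\Delta$ on $f_j$ and on $x_j$ separately already yields $\Delta(f_j)=a_j f_j$ and $\Delta(x_j)=\alpha_j b_j f_j$ with arbitrary scalars, which is precisely the general form of a derivation by the Proposition; one may optionally test on $f_i+f_j$ or $f_i+x_j$ to confirm no hidden off-diagonal entries survive, but since the derivation matrices are themselves block-diagonal (acting within each two-dimensional summand), no such entries can appear. I do not expect a serious obstacle here — the direct-sum structure of $\mathcal{L}_t$ makes the argument much shorter than in the model-nilradical case; the ``hard part,'' such as it is, is merely the bookkeeping of distinguishing the $\alpha_j=0$ and $\alpha_j=-1$ blocks and confirming that in both cases the two relevant scalars are unconstrained, so that the resulting $\Delta$ lies in $Der(\mathcal{L}_t)$.
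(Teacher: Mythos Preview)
Your approach is correct and is exactly the paper's own argument: evaluate $\Delta$ on each basis vector $f_j$ and $x_j$, use that every derivation sends $f_j\mapsto a_jf_j$ and $x_j\mapsto\alpha_jb_jf_j$, and conclude that $\Delta$ already has derivation form. The paper's proof is in fact the three-line version of what you wrote; your additional remarks about testing on sums $f_i+f_j$ are unnecessary, and your parenthetical about the $\alpha_j=0$ blocks contributing \emph{more} free parameters is backwards (those blocks force $D(x_j)=0$, hence $\mu_j=0$ automatically from $\Delta(x_j)=D_{x_j}(x_j)$), but none of this affects the validity of the argument.
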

\begin{proof}  For any local derivation $\Delta$ on the algebra ${\mathcal L_t},$ we put the derivation $D,$ such that:
\[ \quad D(f_j)=a_jf_j, \quad D(x_j)=\alpha_jb_jf_j, \quad 1\leq j\leq n,\]

Then, we get  $$\Delta(f_j)=D_{f_j}(f_j)=a_jf_j,\quad \Delta(x_j)=D_{x_j}(x_j)=\alpha_jb_jf_j.$$

Hence, $\Delta$ is a derivation.
\end{proof}

In the following theorem, we show that $(n+1)$-dimensional solvable Leibniz algebras with $n$-dimensional abelian nilradical have a local derivation which is not a derivation.

\begin{theorem}  $(n+1)$-dimensional solvable Leibniz algebras $R_1$ and $R_2$ (see Theorem \ref{thmQ=1}), admit a local derivation which is not a derivation.
\end{theorem}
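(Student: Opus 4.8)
The plan is to construct an explicit linear map $\Delta$ on $R_1$ (and then on $R_2$) that agrees pointwise with some derivation but is not itself a derivation, using the crucial fact from Proposition~\ref{123} that every derivation $D$ of $R_1$ annihilates $x$, i.e. $D(x)=0$, whereas the derivations of $R_2$ are allowed to send $x$ to an arbitrary element $\sum_j\beta_jf_j$ of the nilradical. First I would record, from Proposition~\ref{123}, that a derivation of $R_1$ is completely determined by the scalars $\alpha_1,\dots,\alpha_n$ via the ``upper-triangular Toeplitz'' action $D(f_i)=\alpha_1 f_i+\alpha_2 f_{i+1}+\dots+\alpha_{n-i+1}f_n$ on the $f$'s and $D(x)=0$. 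The key structural observation is that for a fixed generator $f_i$, the set $\{D_i(f_i): D_i\in Der(R_1)\}$ is the full subspace $\mathrm{span}\{f_i,f_{i+1},\dots,f_n\}$ (choose $\alpha_1,\dots,\alpha_{n-i+1}$ freely), and for $x$ the only possible value is $0$.

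Next I would define $\Delta$ on the basis by a rule that is locally a derivation at each basis vector but whose ``coupling'' between $f_1$ and $f_2$ is inconsistent with any single global derivation. A clean choice: set $\Delta(f_i)=f_i$ for all $i$ (this is $D_x(f_i)$ for the derivation with $\alpha_1=1$, all other $\alpha_j=0$), and set $\Delta(x)=f_1$. One checks each basis element lies in its allowed local-value set: $\Delta(f_i)=f_i\in\mathrm{span}\{f_i,\dots,f_n\}$, and for $x$ we need a derivation $D$ with $D(x)=f_1$ --- but for $R_1$ that is impossible, so this particular $\Delta$ is not even a local derivation of $R_1$. So instead, for $R_1$ I would keep $\Delta(x)=0$ and instead break derivation-ness among the $f_i$: e.g. take $\Delta(f_1)=f_1+f_2$, $\Delta(f_i)=f_i$ for $i\ge 2$, $\Delta(x)=0$. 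Each $\Delta(f_i)$ is realized by a genuine derivation (for $f_1$ use $\alpha_1=\alpha_2=1$, rest $0$; for $f_i,\ i\ge2$, use $\alpha_1=1$, rest $0$), and $\Delta(x)=0=D(x)$ trivially; hence $\Delta\in LocDer(R_1)$. But $\Delta$ is not a derivation: apply the Leibniz rule to $[f_1,x]=f_1+f_2$. We get $\Delta([f_1,x])=\Delta(f_1)+\Delta(f_2)=f_1+f_2+f_2=f_1+2f_2$, while $[\Delta(f_1),x]+[f_1,\Delta(x)]=[f_1+f_2,x]+0=(f_1+f_2)+(f_2+f_3)=f_1+2f_2+f_3$. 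These differ (the $f_3$ term survives when $n\ge3$; for $n=2$ one instead gets $f_1+2f_2$ versus $f_1+2f_2$ --- so for $n=2$ I would use $\Delta(f_1)=f_2$, $\Delta(f_2)=f_2$, giving $\Delta([f_1,x])=f_2+f_2=2f_2$ but $[\Delta f_1,x]+[f_1,\Delta f_2]=[f_2,x]=f_2$, a genuine mismatch). So the failure of the Leibniz identity on the pair $(f_1,x)$ is witnessed in every dimension $n\ge 2$.

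For $R_2$ the same $\Delta$ (extended by $\Delta(x)=0$) works verbatim, since $Der(R_2)$ restricted to the nilradical is the same Toeplitz family and $D(x)=0$ is an admissible value; the Leibniz-rule computation on $[f_1,x]$ is identical. The main obstacle --- really the only subtle point --- is checking that the candidate $\Delta$ genuinely lies in $LocDer$, i.e. that at \emph{each} basis vector (and hence, by linearity, this is all that matters for a linear $\Delta$) there is a derivation hitting the prescribed value; this is where one must use the explicit shape of $Der(R_1)$ and $Der(R_2)$ from Proposition~\ref{123}, and in particular avoid prescribing a nonzero value on $x$ for $R_1$. Once the local-derivation membership is secured, exhibiting one pair on which the Leibniz identity fails is a short direct computation, which I would carry out on $(f_1,x)$ as above, treating $n=2$ separately with the modified $\Delta$.
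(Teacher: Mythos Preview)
Your construction of $\Delta$ is reasonable and, as it happens, does give a local derivation which is not a derivation. However, there is a genuine gap in your argument for local-derivation membership. You write that ``at each basis vector (and hence, by linearity, this is all that matters for a linear $\Delta$) there is a derivation hitting the prescribed value.'' This inference is false in general: the defining condition for a local derivation is that for \emph{every} element $v$ there exists a derivation $D_v$ with $\Delta(v)=D_v(v)$, and since $D_v$ is allowed to vary with $v$, the condition is not linear in $v$ and cannot be reduced to a check on a basis. Concretely, for $v=\sum_i\xi_if_i+\xi_{n+1}x$ your map gives $\Delta(v)=\xi_1f_1+(\xi_1+\xi_2)f_2+\xi_3f_3+\cdots+\xi_nf_n$, and you must exhibit parameters $\alpha_1,\dots,\alpha_n$ (depending on the $\xi_i$) solving the triangular system $\sum_{i\le j}\xi_i\alpha_{j-i+1}=\text{(coefficient of }f_j\text{ in }\Delta(v))$ for every $j$. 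This \emph{can} be done (split into the cases $\xi_1\ne 0$ and $\xi_1=0$ and solve recursively), but it is precisely this verification that constitutes the substance of the proof, and you have skipped it on the basis of an invalid principle.

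For comparison, the paper's proof does exactly this kind of element-by-element verification: it defines $\Delta(\sum_i\xi_if_i+\xi_{n+1}x)=2\xi_1f_{n-1}+\xi_2f_n$, and for each $v$ explicitly produces a derivation $D$ (namely $2D_1+tD_2$ with $t=-\xi_2/\xi_1$ when $\xi_1\ne0$, and $D_2$ when $\xi_1=0$) such that $D(v)=\Delta(v)$. Your example is a legitimate alternative construction, but to make the proof complete you must carry out the analogous case analysis over all $v$, not just over basis vectors.
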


\begin{proof} Let us consider the linear operator $\Delta$ on $R_1$ and $R_2$, such that
$$\Delta\left(\sum\limits_{i=1}^n\xi_ie_i+\xi_{n+1}x\right)=2\xi_1e_{n-1}+\xi_2e_n.$$

By Proposition \ref{123}, it is not difficult to see that $\Delta$ is not a derivation.
We show that, $\Delta$ is a local derivation on $R_1$ and $R_2$.

Consider the derivations $D_1$ and $D_2$ on the algebras $R_1$ and $R_2,$ defined as:
\begin{equation*}\begin{split}
D_1\left(\sum\limits_{i=1}^n\xi_ie_i+\xi_{n+1}x\right)&=\xi_1e_{n-1}+\xi_2e_n,\\
D_2\left(\sum\limits_{i=1}^n\xi_ie_i+\xi_{n+1}x\right)&=\xi_1e_n.
\end{split}
\end{equation*}

Now, for any $\xi=\sum\limits_{i=1}^{n}\xi_ie_i+\xi_{n+1}x,$ we shall find a derivation $D,$ such that $\Delta(\xi) = D(\xi).$

If  $\xi_1=0,$ then
$$\Delta(\xi)=0=D_2(\xi).$$

If $\xi_1\neq 0,$ then setting
 $D=2D_1+tD_2,$ where $t=-\frac{\xi_2}{\xi_1},$ we obtain that
\begin{equation*}\begin{split}
 \Delta(\xi)&=2\xi_1e_{n-1}+\xi_2e_n=2\xi_1e_{n-1}+(2\xi_2+t\xi_1)e_n=2(\xi_1e_{n-1}+\xi_2e_n)+t\xi_1e_n=\\
                            &= 2D_1(\xi)+tD_2(\xi)=D(\xi).
\end{split}\end{equation*}

Hence, $\Delta$ is a local derivation.
\end{proof}

\section{$2$-local derivation of solvable Leibniz algebras}
\subsection{$2$-local derivation of solvable  Leibniz algebra $R(N_{m_1,\dots, m_s},s)$}

Now we shall give the main result concerning of the $2$-local derivations of solvable Leibniz algebra $R(N_{m_1,\dots, m_s},s).$

Consider an element $q=\sum\limits_{t=1}^s\sum\limits_{i=1}^{m_t}e_i^t$ of $R(N_{m_1,\dots, m_s},s).$

\begin{lemma}\label{aa}Let $\nabla$ be a 2-local derivation of $R(N_{m_1,\dots, m_s},s)$, such that
$\nabla(q)=0.$ Then $\nabla\equiv0.$
\end{lemma}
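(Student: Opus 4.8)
The strategy is to show that the single hypothesis $\nabla(q)=0$, together with the defining property of a $2$-local derivation, already forces $\nabla$ to vanish on every basis element. The key tool is Proposition \ref{pro}: every derivation $D$ of $R(N_{m_1,\dots,m_s},s)$ is determined by the parameters $\alpha_1,\alpha_2,\beta_2,\dots,\beta_s$, and in particular $D(q)$ can be computed explicitly as a linear combination of the basis vectors. First I would fix an arbitrary basis element $u$ (one of the $e_i^t$ or $x_1$) and apply the $2$-local condition to the pair $\{q,u\}$: there is a derivation $D_{q,u}$ with $\nabla(q)=D_{q,u}(q)$ and $\nabla(u)=D_{q,u}(u)$. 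Since $\nabla(q)=0$, the parameters of $D_{q,u}$ must satisfy the linear system coming from $D_{q,u}(q)=0$.

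The heart of the argument is to check that $D_{q,u}(q)=0$ forces \emph{all} the defining parameters of $D_{q,u}$ to be zero, hence $D_{q,u}=0$ and therefore $\nabla(u)=D_{q,u}(u)=0$. To see this, write out $D(q)$ using Proposition \ref{pro}:
\begin{equation*}
D(q)=\sum_{i=1}^{m_1}\bigl(i\alpha_1 e^1_i+\alpha_2 e^1_{i+1}\bigr)-\alpha_2 e^1_1
+\sum_{t=2}^{s}\sum_{i=1}^{m_t}\bigl(((i-1)\alpha_1+\beta_t)e^t_i+\alpha_2 e^t_{i+1}\bigr),
\end{equation*}
with the convention that $e^t_{m_t+1}=0$. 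Collecting the coefficient of $e^1_1$ gives $\alpha_1\cdot 1-\alpha_2+\alpha_2=\alpha_1$, so $\alpha_1=0$; then the coefficient of $e^1_2$ gives $2\alpha_1+\alpha_2=\alpha_2=0$, so $\alpha_2=0$; and finally, for each $t\ge 2$, the coefficient of $e^t_1$ gives $((1-1)\alpha_1+\beta_t)=\beta_t=0$. Thus every parameter vanishes and $D_{q,u}\equiv 0$, whence $\nabla(u)=0$. Running this over all basis elements $u$ shows $\nabla$ vanishes on a basis, and since a $2$-local derivation sends $0$ to $0$ and is homogeneous in the sense that $\nabla(\lambda u)=D_{q,\lambda u}(\lambda u)=0$, the conclusion $\nabla\equiv 0$ follows once one also notes that on an arbitrary element $v$ one applies the $2$-local condition to $\{q,v\}$ directly and repeats the same computation, getting $\nabla(v)=D_{q,v}(v)=0$.

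The main obstacle I anticipate is purely bookkeeping: the element $q$ must be chosen so that \emph{no} cancellation can hide a nonzero parameter, i.e. the linear map sending the derivation-parameters $(\alpha_1,\alpha_2,\beta_2,\dots,\beta_s)$ to $D(q)$ must be injective. The computation above suggests this works because the chain $e^1_1,e^1_2$ isolates $\alpha_1$ and $\alpha_2$ (using the extra term $-\alpha_2 e^1_1$ coming from $D(x_1)$, which is why $q$ need not include $x_1$), and then each $e^t_1$ for $t\ge 2$ isolates the corresponding $\beta_t$. One should double-check the degenerate small cases — e.g. $m_1=1$, or $s=1$ — to make sure the relevant basis vectors $e^1_1,e^1_2,e^t_1$ actually exist; if $m_1=1$ then $q$ contains $e^1_1$ and the coefficient of $e^1_1$ in $D(q)$ is $m_1\alpha_1=\alpha_1$, still forcing $\alpha_1=0$, while $\alpha_2$ appears only through $-\alpha_2 e^1_1=0$... this boundary situation needs the derivation formula for $m_1=1$ to be handled with care, and is the one place the routine calculation genuinely requires attention. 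Apart from that, the proof is a direct substitution-and-compare argument with no conceptual difficulty.
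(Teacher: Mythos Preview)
Your approach is correct and is essentially the same as the paper's: both arguments show that $D_{q,\xi}(q)=0$ forces the derivation $D_{q,\xi}$ to vanish identically, whence $\nabla(\xi)=D_{q,\xi}(\xi)=0$ for every $\xi$. The only difference is cosmetic --- the paper uses the Remark that every derivation is inner and writes $D_{q,\xi}=ad_{a_{q,\xi}}$, then reads off $c_i=d_1^1=0$ from $[q,a_{q,\xi}]=0$, while you work directly with the parameters $\alpha_1,\alpha_2,\beta_t$ of Proposition~\ref{pro}.

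One slip to fix: the term $-\alpha_2 e^1_1$ in your displayed expression for $D(q)$ should not be there, since $q$ does \emph{not} contain $x_1$ and hence $D(x_1)=-\alpha_2 e^1_1$ plays no role. The coefficient of $e^1_1$ in $D(q)$ is simply $\alpha_1$ (no ``$-\alpha_2+\alpha_2$'' cancellation occurs); your conclusion $\alpha_1=0$ is right, but the bookkeeping and the accompanying parenthetical remark about $x_1$ are off.
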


\begin{proof} Take an element $a_{q,\xi}\in R,$ such that
$$\nabla(q)=[a_{q,\xi},q],\ \ \ \nabla(\xi)=[a_{q,\xi},\xi].$$
Then
\begin{equation*}\begin{split}
0=\nabla(q)&=[q,a_{q,\xi}]=\left[\sum\limits_{t=1}^s\sum\limits_{i=1}^{m_t}e_i^t,
\sum\limits_{i=1}^nc_ix_i+\sum\limits_{t=1}^s\sum\limits_{i=1}^{m_t}d_i^te_i^t
\right]\\
                &=\sum\limits_{i=1}^{m_1}c_1ie_1^1+\sum_{t=2}^s\sum\limits_{i=1}^{m_t}c_1(i-1)e_{i}^t+\sum_{t=2}^s\sum\limits_{i=1}^{m_{t}}c_t(i-1)e_{i}^t+               \sum_{t=1}^s\sum\limits_{i=1}^{m_t-1}d_1^1e_{i+1}^t,
\end{split}
\end{equation*}
which implies, $c_i=d_1^1=0$ for all $1\leq i\leq n.$

Thus, $a_{q,\xi}=\sum\limits_{t=2}^sd_1^te_1^t+
\sum\limits_{i=2}^{m_1}d_i^1e_i^1+\sum\limits_{t=2}^s\sum\limits_{i=2}^{m_t}d_i^te_i^t.$

Consequently, for any element $\xi \in R(N_{m_1,\dots, m_s},s),$ we have
\begin{equation*}\begin{split}
\nabla(\xi)=&[\xi,a_{q,\xi}]=[\sum\limits_{i=1}^n\lambda_ix_i+\sum\limits_{t=1}^s\sum\limits_{i=1}^{m_t}\mu_i^te_i^t,
\sum\limits_{t=2}^sd_1^te_1^t+\sum\limits_{i=2}^{m_1}d_i^1e_i^1+\sum\limits_{t=2}^s\sum\limits_{i=2}^{m_t}d_i^te_i^t ]=0.
\end{split}
\end{equation*}

\end{proof}

\begin{theorem}\label{abc} Any $2$-local derivation
of the solvable Leibniz algebra $R(N_{m_1,\dots, m_s},s)$ is a derivation.
\end{theorem}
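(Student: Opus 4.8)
The plan is to reduce the general case to the situation covered by Lemma \ref{aa} via a ``normalization'' step using an inner derivation. Let $\nabla$ be a $2$-local derivation of $R:=R(N_{m_1,\dots, m_s},s)$ and consider the distinguished element $q=\sum_{t=1}^s\sum_{i=1}^{m_t}e_i^t$. By definition there is a derivation $D_q\in\mathrm{Der}(R)$ with $\nabla(q)=D_q(q)$. Since every derivation of $R$ is inner (by the Remark following Proposition \ref{pro}), there is some $a\in R$ with $D_q=\mathrm{ad}_a$, so $\nabla(q)=[q,a]$. Now set $\nabla':=\nabla-\mathrm{ad}_a$. A routine check shows that $\nabla'$ is again a $2$-local derivation of $R$ (for $x,y$ pick the derivation $D_{x,y}$ witnessing $\nabla$ on the pair, then $D_{x,y}-\mathrm{ad}_a$ witnesses $\nabla'$), and by construction $\nabla'(q)=0$.

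By Lemma \ref{aa} applied to $\nabla'$ we conclude $\nabla'\equiv 0$, i.e.\ $\nabla=\mathrm{ad}_a$ on all of $R$. In particular $\nabla$ is a (global, inner) derivation, which is exactly the assertion of the theorem.

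The one place requiring care is the claim that $\nabla'=\nabla-\mathrm{ad}_a$ is still a $2$-local derivation: one must verify that for every pair $x,y$ there is a single derivation $D'_{x,y}$ with $\nabla'(x)=D'_{x,y}(x)$ and $\nabla'(y)=D'_{x,y}(y)$. This follows immediately by taking $D'_{x,y}=D_{x,y}-\mathrm{ad}_a$, where $D_{x,y}$ is the derivation furnished by the $2$-local property of $\nabla$ on the pair $(x,y)$, since $\mathrm{Der}(R)$ is a vector space. I also need Lemma \ref{aa} to apply to $\nabla'$ even though $\nabla'$ need not be linear a priori; but Lemma \ref{aa} is stated for arbitrary $2$-local derivations and its proof never uses linearity, so there is no gap. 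The main (only) obstacle is thus entirely bookkeeping: confirming the inner-derivation reduction is legitimate, after which the theorem is an immediate corollary of Lemma \ref{aa}.
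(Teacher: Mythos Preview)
Your proof is correct and follows essentially the same route as the paper: subtract from $\nabla$ a derivation agreeing with it at $q$, then invoke Lemma~\ref{aa} to conclude the difference vanishes identically. The only cosmetic difference is that you pass through the inner-derivation description $D_q=\mathrm{ad}_a$, whereas the paper simply subtracts the derivation $D_{\xi,q}$ directly; since $\mathrm{Der}(R)$ is a vector space this step works for any derivation, so the appeal to innerness is not needed here (though it is implicitly used inside the proof of Lemma~\ref{aa}).
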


\begin{proof}  Let $\nabla$ be a 2-local derivation of $R(N_{m_1,\dots, m_s},s).$  Take a derivation $D_{\xi,q}$ such that
\begin{equation*}
\nabla(q)=D_{\xi,q}(q).
\end{equation*}

Set $\nabla_1=\nabla-D_{\xi,q}.$ Then $\nabla_1$ is a 2-local
derivation, such that $\nabla_1(q)=0.$

By Lemma \ref{aa}, we get that $\nabla_1\equiv0.$  Hence, $\nabla=D_{\xi,q},$ i.e., $\nabla$ is a
derivation.
\end{proof}

\subsection{$2$-local derivation of solvable  Leibniz algebras with alebian nilradical}
Now we shall give the result concerning of $2$-local derivations of solvable  Leibniz algebras with abelian nilradical.

\begin{theorem}\label{abcd}
The algebra $\mathcal{L}_t$ admits a $2$-local derivation which is not a derivation.
\end{theorem}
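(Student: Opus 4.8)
The plan is to exhibit an explicit $2$-local derivation $\nabla$ on $\mathcal{L}_t$ that fails to be linear (hence is not a derivation) but still satisfies the defining $2$-local property pointwise. Recall from the classification that $\mathcal{L}_t$ decomposes as a direct sum of two-dimensional ideals spanned by $\{f_j,x_j\}$, and that by the cited proposition every derivation $D$ acts by $D(f_j)=a_jf_j$, $D(x_j)=\alpha_j b_j f_j$, with $\alpha_j\in\{-1,0\}$. The key structural observation is that on the summands where $\alpha_j=0$ (there are $t$ of them), a derivation annihilates $x_j$ entirely, while on the summands with $\alpha_j=-1$ a derivation sends $x_j$ to an arbitrary multiple of $f_j$. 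So on each $\alpha_j=0$ block a derivation is diagonalizable with a single free eigenvalue $a_j$ acting on $f_j$ and $0$ on $x_j$; the evaluation of a derivation at a vector $\xi f_j+\eta x_j$ is $a_j\xi f_j$, which does not see $\eta$ at all.

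The concrete construction I would use: pick an index $j_0$ with $\alpha_{j_0}=0$ (so $t\geq 1$ is needed; if $t=0$ one should instead use an $\alpha_j=-1$ block, and I expect the statement implicitly assumes $t\ge 1$, or else a separate two-line argument handles $\alpha_j=-1$ blocks via the free parameter $b_j$), and define $\nabla$ on $\mathcal{L}_t$ by a rule of the form
\[
\nabla\Bigl(\sum_{j=1}^n(\xi_j f_j+\eta_j x_j)\Bigr)=\phi(\xi_{j_0},\eta_{j_0})\,f_{j_0},
\]
where $\phi:\mathbb{C}^2\to\mathbb{C}$ is a cleverly chosen non-linear function — for instance something homogeneous of degree one along each "direction" but not additive, such as $\phi(\xi,\eta)=\xi$ when $\eta=0$ and $\phi(\xi,\eta)=\eta$ otherwise, or a ratio-type expression $\phi(\xi,\eta)=\xi^2/(\xi+\eta)$ with a suitable convention at the poles. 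First I would verify $\nabla$ is not a derivation: if it were, by the proposition it would be linear of the prescribed diagonal form, contradicting the non-additivity of $\phi$. Then, for any two elements $u=\sum(\xi_j f_j+\eta_j x_j)$ and $v=\sum(\xi_j' f_j+\eta_j' x_j)$, I would produce a single derivation $D_{u,v}$ with $D_{u,v}(u)=\nabla(u)$ and $D_{u,v}(v)=\nabla(v)$: since $D_{u,v}(u)=a_{j_0}\xi_{j_0}f_{j_0}+\sum_{j\neq j_0}(\cdots)$ and we only need to match the $f_{j_0}$-component (and kill everything else), we can set $a_j=0$ and $b_j=0$ for all $j$ except choose $a_{j_0}$ to solve the two linear equations $a_{j_0}\xi_{j_0}=\phi(\xi_{j_0},\eta_{j_0})$ and $a_{j_0}\xi_{j_0}'=\phi(\xi_{j_0}',\eta_{j_0}')$ simultaneously.

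The main obstacle is exactly this last step: a single scalar $a_{j_0}$ generally cannot satisfy both equations unless $\phi$ is chosen so that $\phi(\xi,\eta)/\xi$ takes a consistent value, which forces me to engineer $\phi$ to be of the form $\phi(\xi,\eta)=\xi\cdot g(\xi,\eta)$ where $g$ is constant along... no — rather, the correct fix is to observe that a derivation of $\mathcal L_t$ also has the free parameters $b_j$ acting on the $\alpha_j=-1$ blocks, but those do not help for a $j_0$ with $\alpha_{j_0}=0$. So the resolution is to exploit that on the $j_0$-block a derivation's value at $\xi_{j_0}f_{j_0}+\eta_{j_0}x_{j_0}$ is $a_{j_0}\xi_{j_0}f_{j_0}$: the $\eta$-coordinate is invisible, so I should instead take $\phi$ depending only on $\xi$, e.g. $\phi(\xi,\eta)=\xi$ — but that is additive. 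The genuine construction therefore must use that $\nabla$ need not be linear: define $\nabla(u)=\psi(u)\,f_{j_0}$ where $\psi$ is a non-linear functional that agrees with $\xi_{j_0}\mapsto c\,\xi_{j_0}$ on every one-dimensional subspace but with $c$ depending on the line. Concretely, the cleanest choice is $\nabla(u)=\bigl(\xi_{j_0}+\eta_{j_0}\bigr)\cdot\mathbf{1}_{[\eta_{j_0}\ne 0]}\cdot 0 + \dots$; I would spend the bulk of the writeup pinning down one explicit $\psi$ — I expect $\psi(u)$ equal to the ratio of two quadratic forms in the $j_0$-coordinates, or $\psi(u)=\xi_{j_0}$ if $\eta_{j_0}=0$ and $\psi(u)=0$ otherwise (non-linear, yet on any line through the origin it is linear, so for a fixed pair $u,v$ one finds the required $a_{j_0}$ — indeed if both $\eta$'s vanish use $a_{j_0}=1$, if both are nonzero use $a_{j_0}=0$, and if exactly one vanishes, say $\eta_{j_0}=0\ne\eta_{j_0}'$, solve $a_{j_0}\xi_{j_0}=\xi_{j_0}$, $a_{j_0}\xi_{j_0}'=0$, forcing $\xi_{j_0}'=0$ which need not hold — so this still fails). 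The honest conclusion is that the verification of the $2$-local property for a fixed pair $u,v$ is the crux, and it works precisely when $\psi$ restricted to the plane $P=\mathbb{C}f_{j_0}\oplus\mathbb{C}x_{j_0}$ has the property that for any two vectors in $P$ there is a derivation matching both values; since derivations restricted to $P$ form the one-parameter family $\xi f_{j_0}+\eta x_{j_0}\mapsto a\xi f_{j_0}$, this holds iff $\psi$ is of the form $\psi(\xi,\eta)=a(\xi,\eta)\,\xi$ with $a$ taking, on any two-point set, a common value — i.e. iff $\psi(\xi,\eta)=\xi\cdot h(\eta/\xi)$... and one checks $h\equiv\text{const}$ is forced, giving linearity. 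Thus I anticipate the actual proof in the paper uses a different, larger derivation algebra than my first guess suggests, or restricts to a case (e.g. $t<n$, using an $\alpha_j=-1$ block where the extra parameter $b_j$ decouples $\eta_j$ from the constraint), and the writeup should follow that structure: \emph{choose a block with $\alpha_j=-1$, define $\nabla(\sum(\xi_if_i+\eta_ix_i))=F(\eta_{j_0})f_{j_0}$ for a non-linear $F$ with $F(0)=0$, note it is not a derivation, and for any $u,v$ pick the derivation with $b_{j_0}$ solving $-b_{j_0}\eta_{j_0}=F(\eta_{j_0})$ and $-b_{j_0}\eta_{j_0}'=F(\eta_{j_0}')$} — which again needs $F(\eta)=-b\eta$. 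The real obstacle is that $2$-local derivations on such a small, rigid algebra are hard to make nonlinear, so I would look for $\nabla$ acting across two different blocks, choosing $\nabla(u)$ to be a component that requires two different derivations for $u$ versus $v$ only when $u,v$ are "aligned" in a way that is automatically excluded; writing this carefully is where essentially all the work lies.
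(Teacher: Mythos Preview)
Your proposal does not reach a proof; the crucial combination of ideas is missing. You correctly land on the right setting---an $\alpha_j=-1$ block, so that a derivation restricted there has \emph{two} free parameters, $D(f_j)=a f_j$ and $D(x_j)=-b f_j$---but you then only exploit one parameter at a time (first $a$ alone, then $b$ alone), and each time the matching system collapses to a single scalar equation that forces linearity. The paper's construction uses both simultaneously: on a block with $\alpha_1=-1$ one has $D(\xi_1 f_1+\xi_{n+1} x_1)=(a\xi_1+b\xi_{n+1})f_1$, so matching $\nabla$ at two points gives a genuine $2\times 2$ linear system
\[
\begin{cases}
a\,\xi_1+b\,\xi_{n+1}=\phi(\xi_1,\xi_{n+1}),\\
a\,\eta_1+b\,\eta_{n+1}=\phi(\eta_1,\eta_{n+1}),
\end{cases}
\]
which is always solvable provided $\phi$ is homogeneous of degree one: when the coefficient determinant is nonzero the solution is unique, and when the rows are proportional the right-hand sides are proportional with the same factor by homogeneity, so the system is consistent. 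The paper takes $\phi(z_1,z_2)=z_1^2/z_2$ (with $\phi=0$ when $z_2=0$), which is homogeneous but not additive, and sets $\nabla(\xi)=\phi(\xi_1,\xi_{n+1})f_1$.

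So the missing idea is precisely: let $\nabla$ depend on \emph{both} coordinates of a single $\alpha_j=-1$ block through a degree-one homogeneous, non-additive function, and use \emph{both} derivation parameters $a,b$ to solve the resulting $2\times2$ system. Your ``ratio-type expression $\xi^2/(\xi+\eta)$'' was already essentially the right object; you only needed to transplant it from the $\alpha_{j_0}=0$ block (where there is just one parameter and the argument necessarily fails, as you discovered) to an $\alpha_j=-1$ block and exploit the second degree of freedom. No cross-block construction is needed. Note also that this argument tacitly requires at least one block with $\alpha_j=-1$, i.e.\ $t<n$; your worry about that hypothesis is legitimate, and the paper does not address the case $t=n$.
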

\begin{proof}  Let us define a homogeneous non additive function $f$ on $\mathbb{C}^{2}$ as follows

\[ f(z_{1},z_{2}) = \begin{cases}
\frac{z^{2}_{1}}{z_{2}}, & \text{if $z_{2}\neq0$,}\\
0, & \text{if $z_{2}=0$},
\end{cases} \]
where $(z_1,z_2)\in\mathbb{C}^{2}.$

 Define the operator $\nabla$ on $\mathcal{L}_t,$ such that
\begin{equation}\label{fff}
\nabla(\xi)=f(\xi_{1},\xi_{n+1})f_{1},
\end{equation}
for any element $\xi=\sum\limits_{i=1}^{n}\xi_{i}f_{i}+\sum\limits_{i=1}^{n}\xi_{n+i}x_i,$

The operator $\nabla$ is not a derivation, since it is not
linear.

Let us show that, $\nabla$ is a $2$-local derivation. For this purpose, define a derivation $D$ on $\mathcal{L}_t$ by

$$D(\xi)
=(a\xi_{1}+b\xi_{n+1})f_1.$$

For each pair of elements $\xi$ and $\eta,$ we choose $a$ and $b,$ such that
$\nabla(\xi)=D(\xi)$ and $\nabla(\eta)=D(\eta)$.
Let us rewrite the above equalities as system of linear equations with respect to unknowns $a,\ b$  as follows

\begin{equation}\label{sss}
\begin{cases}
\xi_{1} a+\xi_{n+1}b =f(\xi_{1}, \xi_{n+1}), \\
\eta_{1} a+\eta_{n+1} b = f(\eta_{1}, \eta_{n+1}).
\end{cases}
\end{equation}

\textbf{Case 1}. $\xi_{1}\eta_{n+1}-\xi_{n+1}\eta_{1}=0$. In this case, since the right-hand side of the system (\ref{sss}) is homogeneous, it has infinitely many solutions.

\textbf{Case 2}. $\xi_{1}\eta_{n+1}-\xi_{n+1}\eta_{1}\neq0$. In this case, the system (\ref{sss})  has a unique solution.

\end{proof}

\begin{proposition}
Any $2$-local derivation of the algebra $R_1$ is a derivation.
\end{proposition}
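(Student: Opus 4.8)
The plan is to mimic the strategy that worked for $R(N_{m_1,\dots,m_s},s)$: find a single "test element" $q$ on which the $2$-local derivation $\nabla$ can be normalized, prove that a $2$-local derivation vanishing on $q$ must vanish identically, and then conclude. Concretely, I would first record from Proposition \ref{123} that every derivation $D$ of $R_1$ is determined by parameters $\alpha_1,\dots,\alpha_n$ via $D(f_i)=\alpha_1 f_i+\sum_{j=i+1}^n\alpha_{j-i+1}f_j$ and $D(x)=0$; in particular every derivation kills $x$, and on the nilradical it acts by an upper-triangular Toeplitz matrix in the basis $f_1,\dots,f_n$. So for any element $\xi=\sum_{i=1}^n\xi_i f_i+\xi_{n+1}x$ we have $D(\xi)=\sum_{i=1}^n\xi_i D(f_i)$, whose $f_1$-coordinate is $\alpha_1\xi_1$ and, more generally, whose $f_k$-coordinate is $\alpha_1\xi_k+\sum_{i=1}^{k-1}\alpha_{k-i+1}\xi_i$.

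Next I would take the test element $q=f_1+x$ (or $q=\sum_{i=1}^n f_i + x$ if a richer element is needed), choose a derivation $D_{q,\xi}$ with $\nabla(q)=D_{q,\xi}(q)$, and replace $\nabla$ by $\nabla_1=\nabla-D_{q,\xi}$, which is again a $2$-local derivation with $\nabla_1(q)=0$. The heart of the argument is then a lemma: \emph{if $\nabla_1$ is a $2$-local derivation of $R_1$ with $\nabla_1(q)=0$, then $\nabla_1\equiv 0$.} To prove it, for each $\xi$ pick $a_{q,\xi}\in Der(R_1)$, i.e. a derivation with parameters $\alpha_1(\xi),\dots,\alpha_n(\xi)$, satisfying $\nabla_1(q)=a_{q,\xi}(q)=0$ and $\nabla_1(\xi)=a_{q,\xi}(\xi)$. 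The condition $a_{q,\xi}(q)=0$ forces a system of linear constraints on the $\alpha_j(\xi)$: writing $q=\sum_{i=1}^n f_i+x$, the $f_k$-coordinate of $a_{q,\xi}(q)$ is $\sum_{i=1}^{k}\alpha_{k-i+1}(\xi)$ (reading indices appropriately), and setting all these to zero recursively yields $\alpha_1(\xi)=\alpha_2(\xi)=\dots=\alpha_n(\xi)=0$, hence $a_{q,\xi}=0$ and therefore $\nabla_1(\xi)=0$. (If $q=f_1+x$ turns out not to kill enough derivations, I would instead use $q=\sum_{i=1}^n f_i+x$, whose annihilator among derivations is exactly $\{0\}$ by the Toeplitz/recursive structure; choosing $q$ correctly is a small bookkeeping point, not a real difficulty.)

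With the lemma in hand the theorem is immediate: $\nabla_1\equiv 0$ gives $\nabla=D_{q,\xi}\in Der(R_1)$. I expect the main obstacle to be purely computational — verifying that the linear system coming from $a_{q,\xi}(q)=0$ genuinely has only the trivial solution for the parameters, which amounts to checking that the relevant matrix (built from the coordinates of $q$ and the Toeplitz action) is invertible; this is where the precise choice of $q$ matters, and one must be careful that $R_1$, unlike $R_2$, has no $x$-component freedom in its derivations (indeed $Der(R_1)$ is "smaller"), so the normalization on $q$ pins down $\nabla$ on the whole nilradical \emph{and} forces $\nabla(x)$ to be consistent. A secondary subtlety worth a sentence in the write-up is confirming $\nabla(x)=0$: since every derivation of $R_1$ kills $x$, any $2$-local derivation automatically satisfies $\nabla(x)=D_{x,\eta}(x)=0$ for every $\eta$, so this is free and can be used to simplify the analysis of $\nabla$ on the rest of the algebra.
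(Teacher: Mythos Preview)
Your proposal is correct and follows essentially the same strategy as the paper: normalize $\nabla$ on a single test element, show that a $2$-local derivation vanishing on that element must vanish identically, and conclude. The only difference is cosmetic: the paper simply takes $q=f_1$, since for any derivation $D$ of $R_1$ one has $D(f_1)=\sum_{i=1}^n\alpha_i f_i$, so $D(f_1)=0$ already forces all the parameters $\alpha_i$ to vanish and hence $D=0$; your addition of $x$ (which every derivation kills, as you note) and of the other $f_i$'s is harmless but unnecessary.
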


\begin{proof} Let $\nabla$ be a $2$-local derivation on $R_1,$ such that $\nabla(f_1)=0.$
Then for any element $\xi=\sum\limits_{i=1}^{n}\xi_if_i+\xi_{n+1}x\in R_1,$ there exists a derivation $D_{f_1,\xi}(\xi)$, such that
$$\nabla(f_{1})=D_{f_1,\xi}(f_1),\quad \nabla(\xi)=D_{f_1,\xi}(\xi).$$

Hence,
$$0=\nabla(f_{1})=D_{f_1,\xi}(f_1)=\sum\limits_{i=1}^n\alpha_if_i,$$
which implies,  $\alpha_{i}=0$ for $1\leq i\leq n.$

Consequently, from the description of the derivation $R_1,$ we conclude that
$D_{f_1,\xi}=0.$
Thus, we obtain that if $\nabla(f_1)=0,$ then
$
\nabla\equiv0.
$

Let now $\nabla$ be an arbitrary $2$-local derivation of \(R_1\).
Take a derivation $D_{f_1,\xi},$ such that
\begin{equation*}
\nabla(f_1)=D_{f_1,\xi}(f_1)\ \ \text{and} \ \ \nabla(\xi)=D_{f_1,\xi}(\xi).
\end{equation*}

Set $\nabla_1=\nabla-D_{f_1,\xi}.$ Then $\nabla_1$ is a $2$-local
derivation, such that $\nabla_1(f_1)=0.$ Hence $\nabla_1(\xi)=0$ for all $\xi\in R_1,$  which implies $\nabla=D_{f_1,\xi}.$
Therefore, $\nabla$ is a
derivation.
\end{proof}

\begin{theorem}
Solvable Leibniz algebra $R_2$ admits a $2$-local derivation which is not a derivation.
\end{theorem}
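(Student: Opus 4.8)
The plan is to reproduce, almost verbatim, the construction used in the proof of Theorem~\ref{abcd} for $\mathcal{L}_t$, once we locate inside $Der(R_2)$ a two-parameter subfamily of derivations all of whose values lie along a single fixed basis vector. By Proposition~\ref{123}, a derivation $D$ of $R_2$ is determined by the scalars $\alpha_1,\dots,\alpha_n,\beta_1,\dots,\beta_n$. Keeping only $\alpha_n$ and $\beta_n$ nonzero gives $D(f_1)=\alpha_n f_n$, $D(f_i)=0$ for $2\le i\le n$, and $D(x)=\beta_n f_n$, hence for $\xi=\sum_{i=1}^n\xi_i f_i+\xi_{n+1}x$ one has
\[
D(\xi)=(\alpha_n\xi_1+\beta_n\xi_{n+1})\,f_n .
\]
A direct check of the Leibniz identity on generators confirms that every such $D$ is indeed a derivation; this is where one uses that the nilradical $\mathbf{a}_n$ is abelian, so that all cross terms $[f_i,f_n]$ vanish. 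This family plays exactly the role that $D(\xi)=(a\xi_1+b\xi_{n+1})f_1$ played for $\mathcal{L}_t$.

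With this in place, take the homogeneous (degree one) non-additive function $f$ on $\mathbb{C}^2$ defined by $f(z_1,z_2)=z_1^2/z_2$ if $z_2\neq 0$ and $f(z_1,z_2)=0$ if $z_2=0$, and define the operator $\nabla$ on $R_2$ by
\[
\nabla(\xi)=f(\xi_1,\xi_{n+1})\,f_n,\qquad \xi=\sum_{i=1}^n\xi_i f_i+\xi_{n+1}x .
\]
Since $f$ is not additive, $\nabla$ is not linear and hence is not a derivation. To show that $\nabla$ is a $2$-local derivation, fix arbitrary $\xi,\eta\in R_2$ and look for $a=\alpha_n$, $b=\beta_n$ with $D(\xi)=\nabla(\xi)$ and $D(\eta)=\nabla(\eta)$ for $D$ in the family above; this is the linear system
\[
\begin{cases}
\xi_1 a+\xi_{n+1}b=f(\xi_1,\xi_{n+1}),\\
\eta_1 a+\eta_{n+1}b=f(\eta_1,\eta_{n+1}).
\end{cases}
\]
If $\xi_1\eta_{n+1}-\xi_{n+1}\eta_1\neq 0$ the system has a unique solution. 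If this determinant vanishes, then $(\xi_1,\xi_{n+1})$ and $(\eta_1,\eta_{n+1})$ are proportional, and homogeneity of $f$ makes the two equations compatible, so the system is still consistent (with infinitely many solutions). In either case a suitable derivation $D$ exists, so $\nabla$ is a $2$-local derivation, which finishes the proof.

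There is no genuinely hard step here; the two points needing a little care are (i) confirming that the two-parameter family above actually consists of derivations — i.e. that Proposition~\ref{123} describes all of $Der(R_2)$, not merely a necessary form — and (ii) the degenerate case of the $2\times 2$ system, where one must invoke the homogeneity of $f$ rather than just count equations. The conceptual content is simply that, unlike $R(N_{m_1,\dots,m_s},s)$ (cf. Theorem~\ref{abc}) but exactly like $\mathcal{L}_t$, the algebra $R_2$ carries "too few" derivations along the distinguished direction $f_n$: the scalar $\alpha_n\xi_1+\beta_n\xi_{n+1}$ depends on only the two coordinates $\xi_1,\xi_{n+1}$ of $\xi$, which is precisely what allows a nonlinear $2$-local derivation.
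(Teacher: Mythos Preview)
Your proposal is correct and follows exactly the approach the paper intends: the paper's own proof consists of the single sentence ``The proof is similar to the proof of Theorem~\ref{abcd}'', and you have carried out precisely that adaptation, correctly identifying $f_n$ (rather than $f_1$) as the target direction so that the two-parameter family $D(\xi)=(\alpha_n\xi_1+\beta_n\xi_{n+1})f_n$ inside $Der(R_2)$ plays the role that $D(\xi)=(a\xi_1+b\xi_{n+1})f_1$ played for $\mathcal{L}_t$. Your handling of the degenerate case of the $2\times2$ system via homogeneity of $f$ is the same as the paper's.
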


\begin{proof} The proof is similar to the proof of Theorem \ref{abcd}.
\end{proof}

\end{document}